\pdfminorversion=4
\documentclass[final,reqno,onefignum,onetabnum]{siamart171218}
\usepackage{subdepth}
\usepackage{overpic}
\usepackage{amsmath,amstext,amsbsy,amssymb,mathdots}
\usepackage{booktabs,bm}
\usepackage{mathrsfs}
\usepackage{tikz,cite}
\usetikzlibrary{intersections}

\usepackage{courier}
\usetikzlibrary{positioning}
\usetikzlibrary{shapes,arrows}
\usepackage[fleqn,tbtags]{mathtools}
\usepackage{amsfonts}
\usepackage{relsize}
\usepackage{xcolor,colortbl}
\usepackage{psfrag}
\usepackage{alltt}
\usepackage{arydshln}
\usepackage{enumerate}
\usepackage{epstopdf}
\usepackage{enumitem}
\usepackage{multirow}
\usepackage{algorithmic}
\usepackage{framed}
\usepackage{cancel}
\usepackage{tikz}
\usepackage{tkz-euclide}
\usepackage{graphicx} 
\usepackage[caption=false]{subfig} 
\usepackage{amsopn}

\newcommand*{\CopyCounter}[2]{%
  \expandafter\def\csname c@#2\endcsname{\csname c@#1\endcsname}%
  \expandafter\def\csname p@#2\endcsname{\csname p@#1\endcsname}%
  \expandafter\def\csname the#2\endcsname{\csname the#1\endcsname}}

\CopyCounter{Theorem}{ProposedProblem}
\CopyCounter{Theorem}{Proposition}
\CopyCounter{Theorem}{Property}
\CopyCounter{Theorem}{Claim}
\CopyCounter{Theorem}{Lemma}
\CopyCounter{Theorem}{Corollary}
\CopyCounter{Theorem}{Conjecture}
\CopyCounter{Theorem}{Definition}
\CopyCounter{Theorem}{Example}
\CopyCounter{Theorem}{Remark}
\CopyCounter{Theorem}{Question}
\CopyCounter{Theorem}{Condition}
\CopyCounter{Theorem}{Criterion}
\CopyCounter{Theorem}{Observation}
\theoremstyle{plain}

\newcommand{\domain}{\Omega}

\newcommand{\boundary}{\partial \domain}

\newcommand{\xhat}{\bm{\hat{x}}}

\newcommand{\x}{\bm{x}}

\newcommand{\ba}{\bm{a}}

\newcommand{\Bf}{\bm{f}}

\newcommand{\w}{\bm{w}}
\newcommand{\y}{\bm{y}}

\newcommand{\J}{{\cal{J}}}

\newcommand{\R}{\mathbb{R}}

\DeclareMathOperator*{\argmin}{arg\,min}
\DeclareMathOperator*{\argmax}{arg\,max}

\usepackage{xargs}
\usepackage{comment}
%
\usepackage[colorinlistoftodos,prependcaption,textsize=tiny]{todonotes}
\reversemarginpar
\newcommandx{\avtodo}[2][1=]{\todo[linecolor=red,backgroundcolor=red!25,bordercolor=red,#1]{#2}}
\newcommandx{\mgtodo}[2][1=]{\todo[linecolor=blue,backgroundcolor=blue!25,bordercolor=blue,#1]{#2}}

\colorlet{lightgray}{gray!40}

\newcolumntype{"}{@{\hskip\tabcolsep\vrule width 1pt\hskip\tabcolsep}}
\setlist[description]{font=\normalfont\space}

\setlength{\marginparwidth}{3cm}
\newcommand{\Ep}{\mathcal{A}} 
\newcommand{\Op}{\mathcal{X}} 
\newcommand{\Em}{\Delta_{\mathcal{A}}} 
\newcommand{\Om}{\Delta_{r}} 
\newcommand{\bap}{\ba ( \cdot )} 
\newcommand{\Es}{\bm{\theta}} 
\newcommand{\Os}{\bm{\lambda}} 
\newcommand{\bomega}{\bm{\omega}} 
\newcommand{\e}{\bm{e}} 
\newcommand{\Res}{\bm{R}} 
\newcommand{\bJ}{\bm{\mathcal{J}}} 
\newcommand{\A}{\mathcal{A}} 
\newcommand{\K}{K} 

\graphicspath{{fig/}}


\usepackage{lipsum}
\ifpdf
  \DeclareGraphicsExtensions{.eps,.pdf,.png,.jpg}
\else
  \DeclareGraphicsExtensions{.eps}
\fi


\newsiamremark{remark}{Remark}
\newsiamremark{hypothesis}{Hypothesis}
\crefname{hypothesis}{Hypothesis}{Hypotheses}

\headers{Surveillance-Evasion path planning}{M.A. Gilles and A. Vladimirsky}

\title{Evasive path planning under surveillance uncertainty\thanks{Submitted to the editors on 12/26/18
\funding{This work is supported in part by the National
Science Foundation grant DMS-1738010.  The second author's work is also supported by the Simons Foundation Fellowship.}}}
\author{Marc Aur\`ele Gilles\thanks{Center for Applied Mathematics, Cornell University, Ithaca, NY 14853
  (\email{mtg79@cornell.edu}).}
\and Alexander Vladimirsky\thanks{Department of Mathematics and Center for Applied Mathematics, Cornell University, Ithaca, NY 14853
  (\email{vladimirsky@cornell.edu}).}}

\begin{document}
\maketitle
\begin{abstract}
The classical setting of optimal control theory assumes full knowledge of the process dynamics and the costs associated with every control strategy.  The problem becomes much harder if the controller only knows a finite set of possible running cost functions, but has no way of checking which of these running costs is actually in place.
In this paper we address this challenge for a class of evasive path planning problems on a continuous domain, in which an Evader needs to reach a target while
minimizing his exposure to an enemy Observer, who is in turn selecting from a finite set of known surveillance plans.
Our key assumption is that both the evader and the observer need to commit to their (possibly probabilistic) strategies in advance and cannot immediately change their actions based on any newly discovered information about the opponent's current position. We consider two types of evader behavior: in the first one, a completely risk-averse evader seeks a trajectory minimizing his {\em worst-case} cumulative observability, and in the second, the evader is concerned with minimizing the {\em average-case} cumulative observability.
The latter version is naturally interpreted as a semi-infinite strategic game, and we provide an efficient method for approximating its Nash equilibrium.
The proposed approach draws on methods from game theory, convex optimization, optimal control, and multiobjective dynamic programming.
We illustrate our algorithm using numerical examples and discuss the computational complexity, including for the generalized version with multiple evaders.
\end{abstract}

\begin{keywords}
path planning, semi-infinite games, Nash equilibrium, surveillance evasion, convex optimization, Hamilton-Jacobi PDEs
\end{keywords}

\begin{AMS}
49N75, 49N90, 49K35, 91A05, 90C29
\end{AMS}


\section{Introduction}

Path planning is a problem of interest for many communities: traffic engineering, autonomous driving, robotics, and military. In the classical setting, the path planner is assumed to have full information about the environment and chooses a path minimizing some undesirable quantity; e.g., time-to-target, distance traveled, fuel consumption, or threat exposure.
A particular type of continuous path planning problems is surveillance-evasion applications. In the simplest scenario, an evader (E) is choosing a path to minimize its exposure to an observer (O) whose surveillance plan is fixed and fully known to E in advance.
This formulation is conveniently treated in the framework of {\em optimal control theory}, reviewed in ~\cref{sec:ContinuousPath},
with the evader's optimal policy recovered by solving a Hamilton-Jacobi-Bellman (HJB) partial differential equation (PDE).
But the real focus of this paper is on path planning under uncertainty, where E knows the full list of different surveillance plans available to O but does not know which of them is currently in use.

The key assumption of our model is that neither E nor O can change their respective strategy in real time based on the opponent's discovered position or actions.  In practical contexts (e.g., in satellite-based surveillance), this restriction might be due to either a delayed analysis of observations or due to logistical needs of committing to a strategy in advance.
As in many other optimization under uncertainty situations, it is natural for E to treat this as an {\em adversarial} problem -- either because the prior statistics on the frequency of use for specific surveillance plans are unreliable or because O might be actively adjusting these frequencies in response to E's routing choices.

In considering each potential path to its destination, E needs to evaluate the trade-offs in observability with respect to 
different surveillance plans.  This naturally brings us to the notion of {\em Pareto optimality} \cite{Marler} and the numerical methods developed for multi-objective optimal control problems \cite{MitchellSastry,KumarVlad,Guigue,DesillesZidani}.  As we show in~\cref{sec:Pareto_Worst_Expected}, the method introduced in \cite{KumarVlad} can be used to find the deterministic optimal policy for a completely risk-averse evader (i.e., minimizing the worst-case observability).  Unfortunately,
the computational cost of this approach grows exponentially with the number of surveillance plans available to O.
But if the goal for both players is to optimize the average-case/expected observability, we show that this can be accomplished by adopting a much more computationally affordable method from \cite{MitchellSastry}, despite its significant drawbacks for general multi-objective control problems.  Moreover, we show that, if the evader's average-case optimal strategy is deterministic, then that same strategy is also worst-case optimal.

For the rest of the paper, we concentrate on the average-case observability formulation using a semi-infinite zero-sum game \cite{tijs1976semi}
between E and O, each of them searching for the best randomized/mixed strategy -- an optimal probability distribution over that player's available deterministic/``pure'' options.
We refer to these as ``Surveillance-Evasion Games'' (SEGs), although the same terminology was previously used in the 1960s and 1970s to describe a very different class of problems, where the Evader needs to escape from the Observer's surveillance zone as quickly as possible\cite{dobbie1966solution, LewinThesis, LewinBreakwell,LewinOlsder}.
Aside from this terminological overlap, those earlier papers have little in common with our context since in them E and O operated with full information on their opponent's current state, reacted in real time, and sought optimal feedback policies recovered by solving Hamilton-Jacobi-Isaacs equations.

In classical (finite zero-sum two-player) strategic games, the Nash equilibrium is typically obtained using linear programming \cite{osborne1994course}.
But the fact that E's set of pure strategies is uncountably infinite makes this approach unusable in our SEGs.
Instead, we show how to compute the Nash equilibrium in~\cref{sec:ObserverEvader} by combining convex optimization with fast numerical methods for HJB equations.  The computational cost of the resulting method scales at most linearly with the number of surveillance plans.  We illustrate this approach on a large number of examples, with the details of our numerical implementation covered in~\cref{sec:examples}.

We note that the same ideas are also useful outside of surveillance-evasion context, whenever the path planner cannot assess the actually incurred running cost until it reaches the target.
In fact, the same PDEs and semi-infinite zero-sum games can be used to model civilians' routing choices in war zones and other dangerous environments, minimizing their exposure to bomb threats.

Our modeling approach is quite general, but to simplify the exposition we will assume that the evader is moving in a two-dimensional domain with occluding/impenetrable obstacles,  both the observability and E's speed are {\em isotropic} (i.e., independent of E's chosen direction of motion), and all O's surveillance plans are stationary (i.e., the observer is choosing among possible stationary locations).  This further simplifies the PDE aspect of our problem from a general HJB to stationary {\em Eikonal} equations, the efficient numerical methods for which are particularly well-developed in the last 25 years (e.g., \cite{SethBook2}).

In~\cref{sec:MultipleEvaders}, we generalize the problem by considering multiple evaders.  We first treat this as a two-player game between a single observer and a centralized controller of all evaders.  But we also show that the resulting set of strategies is a Nash equilibrium even from the point of view of individual/selfish evaders.
We conclude by discussing further extensions and limitations of our approach in~\cref{sec:Conclude}.


\section{Continuous path planning}
\label{sec:ContinuousPath}
The case where the observer's strategy is fixed and known can be easily handled by methods of classical optimal control theory.
The goal is to guide an evader (E) from its starting position $\x_S$ to its desired target $\x_T$ while minimizing the ``cumulative observability'' (also called ``cumulative cost") along the way through its state space represented by some compact set $\domain \subset \R^d.$
More precisely,  we will suppose that $A$ is a compact set of control values, and $\Ep$ is the set of E's admissible controls
which are measurable functions $\ba: \R \mapsto A.$  The evader's dynamics are defined by
$\y'(t) = \Bf(\y(t),\ba(t))$, with the initial state $\y(0) = \x \in \domain.$ (Even though E only cares about the optimal trajectory from $\x_S$, the method we use encodes optimal trajectories to $\x_T$ from all starting positions $\x.$)
In all of our numerical examples, we will assume that E's state is simply its position, $\Bf$ is its velocity defined on a known map $\domain$
that excludes (impenetrable, occluding) obstacles, and E is allowed to travel along $\boundary$ (including the obstacle boundaries).
Suppose $T_{\ba} = \min \{t \geq 0 \mid \y(t) = \x_T \}$ is
the travel-time-through-$\domain$ associated with this control. A pointwise observability function (also called cost function)
$\K: \domain \times A \mapsto \R$ is then defined to reflect O's surveillance capabilities for different parts of the domain,
taking into account all obstacles/occluders and E's current position and direction.
The cumulative observability is then defined by integrating $\K$ along a trajectory corresponding to $\ba(\cdot)$ with initial position $\x$
\begin{equation}\label{eq:IntegratedObservability}
\J(\x, \ba(\cdot)) = \int_0^{T_{\ba}} \K (\y(t), \ba(t)) \, dt,
\,
\end{equation}
which we will also denote as $\J(\ba( \cdot))$ when $\x$ is clear from the context.
As usual in dynamic programming, the {\it value function} is then defined
by minimizing the cumulative observability:
$u(\x) = \inf_{\ba(\cdot)} \J(\x, \ba(\cdot))$,
with the infimum taken over controls leading to $\x_T$ without leaving $\domain$ (i.e., $T_{\ba} < \infty$
and $\y(t) \in \domain, \forall t \in [0,T_{\ba}]$ along the corresponding trajectory).
Under suitable ``small-time controllability'' assumptions \cite{Bardi_book},
it is easy to show that $u$ is locally Lipschitz on $\domain$.  If it is also
smooth, a Taylor series expansion can be used to show that $u$
satisfies a static Hamilton-Jacobi-Bellman PDE:
\begin{equation}
\min\limits_{\ba \in A}
\left\{
\K(\x, \ba) + \nabla u (\x) \cdot \Bf ( \x, \ba )
\right\}
\, = \, 0,
\; \; \; \forall \x \in \domain \setminus \{ \x_T \};
\hspace*{1.3cm} u(\x_T)=0,
\label{eq:HJB_general}
\end{equation}
with the special treatment at $\boundary \setminus \{ \x_T \}$ where the minimum is taken over the subset of control values $A$ that ensure staying inside $\domain.$

Unfortunately, the value function $u$ is generically non-smooth, and there
usually are starting positions with multiple optimal trajectories  -- these
are the locations where the characteristics cross and $\nabla u$ is
undefined. Thus, a classical solution to \cref{eq:HJB_general} usually does
not exist. The theory of {\it viscosity solutions} introduced by Crandall and
Lions \cite{CranLion} overcomes this difficulty by selecting the unique weak
solution coinciding with the value function of the original control problem.
Restricting the process dynamics to $\domain$ is similarly handled by
switching to domain-constrained viscosity solutions \cite{Soner1_1986, Bardi_book}.

To simplify the exposition, we focus on {\it isotropic} problems, where the observability $\K$ and the speed of motion $f$
depend only on $\x$. In this case, we choose $A=\{ \ba \in \R^d \, \mid \, |\ba| = 1 \}$
and interpret $\ba$ as the direction of motion.  Then $\K(\x, \ba) = \K(\x)$
and $\Bf ( \x, \ba ) = f ( \x) \ba$, with $f$ encoding the speed of motion
through the point $\x$. In this case, the optimal direction is known
analytically: $\ba^* = -\nabla u / |\nabla u|$ and \cref{eq:HJB_general}
reduces to an {\it Eikonal equation}
\begin{equation}
| \nabla u (\x) | f(\x) \; = \; \K(\x),
\qquad \forall \x \in \domain \setminus \{ \x_T \};
\hspace*{2cm} u(\x_T)=0.
\label{eq:Eikonal}
\end{equation}

The characteristics of these static PDEs are precisely the optimal trajectories, which define the direction of ``information flow''.
This is quite useful once \eqref{eq:Eikonal} is discretized on a grid (e.g., substituting upwind divided differences for partial derivatives, while taking
$u=+\infty$ for all gridpoints outside of $\domain$ to enforce the state constraints).  The discretization yields a large coupled system of nonlinear equations.
Knowing the characteristic direction for every gridpoint,  one could, in principle,
re-order the equations, effectively decoupling this system. But since the PDE
is nonlinear, its characteristic directions are not known in advance.
One path\footnote{
Fast Sweeping \cite{zhao2005fast} is another popular approach for gaining efficiency in solving Eikonal equations.
We refer readers to \cite{ChacVlad1, ChacVlad2} for a review of many other ``fast'' techniques, including the hybrid marching/sweeping methods that aim to combine the best features of both approaches.
Even though our own implementation is based on Fast Marching, any of these methods can be used to solve isotropic control problems arising in subsequent sections.  Which one will be faster depends on the domain geometry and the particular pointwise observability functions.}
 to computational efficiency is to
determine those characteristic directions
simultaneously with solving the discretized system, in the spirit
of Dijkstra's classical algorithm for finding shortest paths on graphs \cite{Diks}.
Two such non-iterative methods (Tsitsiklis' Algorithm
\cite{Tsitsiklis} and Sethian's Fast Marching Method
\cite{SethFastMarcLeveSet}) are applicable to this special isotropic case.
Once~\cref{eq:Eikonal} is solved, the optimal trajectory may be recovered by finding the path orthogonal to the level curves of $u(x)$. This can be achieved numerically by the steepest descent method on~$u(x)$. An example of the solution of~\cref{eq:Eikonal} is shown in~\ref{fig:OneObserver}.

\begin{figure}[hhhh]
    \centering
\subfloat[]{\label{fig:a}\includegraphics[width=0.49\linewidth,keepaspectratio]{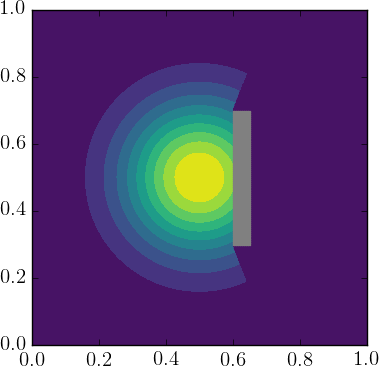}}
\subfloat[]{\label{fig:b}\includegraphics[width=0.49\linewidth,keepaspectratio]{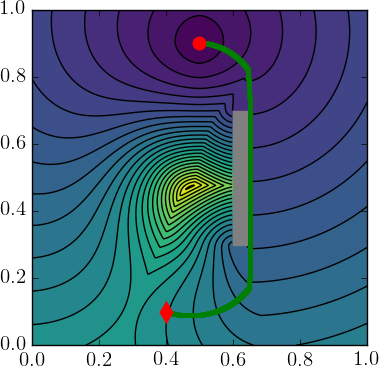}}

\caption{ \footnotesize
    (a) The observability function $\K(\x)$ for an observer position $(0.5,0.5)$. The gray rectangle is an obstacle, which obstructs the vision of the observer. The shadow zones created by the obstacle can be computed using the solution of the Eikonal equation (see~\cref{sec:parameters}).
    (b) A contour plot of the solution of \cref{eq:Eikonal} for $f(\x)=1$ and the cost function in (a). The red diamond is the starting position, the red circle is the target position, and the green curve is the optimal trajectory, which is orthogonal to the level curves of $u(x)$
    and follows a part of the obstacle boundary.
    See~\cref{sec:examples} for additional information and parameters used.
}
\label{fig:OneObserver}
\end{figure}


\section{Multiple observer locations and different notions of optimality} \label{sec:Pareto_Worst_Expected}
We now transition to the setting where the observer has a choice of multiple surveillance plans.
Assuming that the observer remains stationary, this is equivalent to choosing  its position from a fixed set of $r$ locations $\Op = \{ \hat{\x}_1, \cdots , \hat{\x}_r \}$ known to the evader.  Each location is associated with a pointwise observability function $\K_i ( \x )$ for an evader moving through $\x \in \domain$ and an observer stationed at $\hat{\x}_i.$   (Typically, $\K_i$ is a decreasing function of $|\x-\hat{\x}_i|$ when $\x$ is visible from $\hat{\x}_i$ or a small constant $\sigma>0$ if $\x$ is in a ``shadow zone''; see~\cref{sec:examples} for further details.)
This results in $r$ different definitions of the cumulative observability $\bJ = \left[ \J_1, \ldots, \J_r \right]^T$  for a particular control.  Ideally, a rational evader would prefer a path that minimizes its exposure to all possible observer locations $\hat{\x}_i$. Unfortunately, there usually does not exist a single control minimizing all $\J_i$'s simultaneously.  This naturally leads us to a notion of Pareto optimal trajectories and the methods for computing them efficiently.  We review two such methods\footnote{
Here we describe these methods in terms of exposure to different observer's positions, but both of them were introduced for much more general multi-objective control problems. In many applications it is necessary to balance completely different criteria; e.g., time vs fuel vs money vs threat, etc.
Other methods for approximating the full PF can be found in \cite{Guigue} and \cite{DesillesZidani}.}
in~\cref{sec:MultiObjective} and explain how they can be used for planning by an evader optimizing either the worst-case or average-case observability in~\cref{sec:2_approaches}.

\subsection{Multiobjective path planning}\label{sec:MultiObjective}
For a fixed starting position $\x \in \domain$, a control
$\ba(\cdot)$ is {\it dominated} by a control $\hat{\ba}(\cdot)$ if
$\J_i(\x, \hat{\ba}(\cdot)) \leq \J_i(\x, \ba(\cdot))$ for all $i$ and the
inequality is strict for at least one of them. We call $\ba(\cdot)$ {\it
Pareto optimal} if it is not dominated by any other control. In other words,
Pareto optimal controls are the ones that cannot be improved with respect to any one criterion without making them worse with respect to another.
The vector of costs associated with each Pareto optimal control defines a point in $\R^r$ and the set of all such points is
the Pareto Front (PF). In path planning applications, the PF is typically used to carefully evaluate all tradeoffs.
(E.g., what is the smallest attainable $\J_1$ given the desired upper bounds
on $\J_2, \ldots, \J_r$ ?)

Mitchell and Sastry developed a method for multiobjective path planning\cite{MitchellSastry}
based on the usual {\it scalarization} approach to multiobjective
optimization \cite{Marler}. Let $\Om = \{ \Os = (\lambda_{1}, \ldots \lambda_r)
\, \mid \, \sum_{i=1}^r \lambda_i = 1, \, \text{ and all } \lambda_i \geq 0
\}$. For each $\Os \in \Om$ one can define a new pointwise observability function
$\K^{\Os} = \sum_{i=1}^r \lambda_i \K_i$ and a new cumulative observability function $\J^{\Os} = \sum_i \J_i$. A weighted cost Eikonal equation
\begin{equation} \label{eq:WeightedCostEikonal}
| \nabla u^{\Os}(\x)| f(\x) = \K^{\Os}(\x)
\end{equation}
is then solved for a fixed $\Os$, providing a control function $\ba^{\Os} ( \cdot )$ satisfying $\ba^{\Os} ( \cdot)  \in \argmin_{\ba( \cdot ) \in \Ep } \J^{\Os}(\x_S, \ba ( \cdot ) )$.
We call such a control function $\Os$-optimal.
If $\lambda_i > 0$ for all $i$, the obtained ${\Os}$-optimal control is also guaranteed to be Pareto optimal; see ~\cref{fig:pf_scal}.
However, if at least one $\lambda_i = 0$ and multiple $\Os$-optimal strategies exist for a specific $\Os$, then some of the $\Os$-optimal strategies may not be Pareto optimal.
Such corner cases (illustrated in~\cref{fig:observability3}) might require additional pruning; alternatively, one can rule out such non-Pareto trajectories by perturbing ${\Os}$ to ensure that all components are positive.

Additional linear PDEs can be solved simultaneously to compute the individual costs $(\J_1,
\ldots \J_r)$ incurred along any $\Os$-optimal trajectory; e.g., when $f$ and all $\K_i$'s are isotropic,
the corresponding linear equations are
\begin{equation} \label{eq:associatedlinear}
\nabla v_i^{\Os} \cdot \nabla u^{\Os} = \K_i \K^{\Os} / f^2,
\end{equation}
where $v_i^{\Os}(\x)  = \J_i \left( \x, \ba^{\Os} ( \cdot ) \right).$

To approximate the PF, this procedure is repeated for a large number of  $\Os \in
\Om$.
Unfortunately, as shown in~\cref{fig:pf_scal},
scalarization-based methods
can only recover the convex portion of PF \cite{Das}.
\begin{figure}[hhhh]
\begin{center}
\def\tikzPfScale{0.6}
\begin{tabular}{c c c}

\begin{tikzpicture}[
point/.style={draw,shape=circle,inner sep = 0mm,minimum size = .2cm},
scale = \tikzPfScale,
>=stealth',
shorten >=1pt,
shorten <=1pt,
auto,
node distance = 1.5cm,
semithick,
 extended line/.style={shorten >=-#1,shorten <=-#1},
 extended line/.default=1cm,
 one end extended/.style={shorten >=-#1},
 one end extended/.default=1cm,
 tangent/.style={
		 decoration={
				 markings,
				 mark=
						 at position #1
						 with
						 {
								 \coordinate (tangent point-\pgfkeysvalueof{/pgf/decoration/mark info/sequence number}) at (0pt,0pt);
								 \coordinate (tangent unit vector-\pgfkeysvalueof{/pgf/decoration/mark info/sequence number}) at (1,0pt);
								 \coordinate (tangent orthogonal unit vector-\pgfkeysvalueof{/pgf/decoration/mark info/sequence number}) at (0pt,1);
						 }
		 },
		 postaction=decorate
 },
 use tangent/.style={
		 shift=(tangent point-#1),
		 x=(tangent unit vector-#1),
		 y=(tangent orthogonal unit vector-#1)
 },
 use tangent/.default=1
 ]

\def\maxx{5}
\def\maxy{5}


\coordinate (X1) at (0.25*\maxx, 1.00*\maxy);
\coordinate (Xn) at (1.00*\maxx, 0.05*\maxy);

\draw[thick, tangent=0.455]
	(X1) to[out=-75,in=175]
	(Xn);

\node[point,fill=black,scale=0.7] at (X1) {};
\node[point,fill=black,scale=0.7] at (Xn) {};

\coordinate (H1) at (-1,0);
\coordinate (H2) at (\maxx+1,0);
\coordinate (V1) at (0,-1);
\coordinate (V2) at (0,\maxy+1);
\coordinate (O) at (0,0);  
\coordinate (UR) at (1*\maxx,1*\maxy);  

\draw[->] (H1) -- (H2);
\draw[->] (V1) -- (V2);

\node[right] (T) at (H2) {\footnotesize $\J_1$};
\node[below left] (D) at (V2) {\footnotesize $\J_2$};

\coordinate (PL) at (0.39*\maxx + 1.0, 0.3*\maxy + 1.0);
\draw[->, thick, use tangent] (0,0) -- (0,1);

\draw[ dotted, black] (O) -- (UR);
\draw[dashed, color=gray, use tangent] (-3,0) -- (2,0);

\coordinate[use tangent] (P) at (0,0);
\node[point,scale=0.7] (Ppt) at (P) {};
\node[below left]       (Plb) at (P) {$Q$};
\node[right] (PLlb) at (PL) {\scriptsize $\bm{\lambda}$};
\node[above]  (URar) at (UR) {$\J_1 = \J_2$};


\end{tikzpicture}
&


\begin{tikzpicture}[
point/.style={draw,shape=circle,inner sep = 0mm,minimum size = .2cm},
scale = \tikzPfScale,
>=stealth',
shorten >=1pt,
shorten <=1pt,
auto,
node distance = 1.5cm,
semithick,
 extended line/.style={shorten >=-#1,shorten <=-#1},
 extended line/.default=1cm,
 one end extended/.style={shorten >=-#1},
 one end extended/.default=1cm,]

\def\maxx{5}
\def\maxy{5}

\coordinate (P) at (0.21*\maxx, 0.64*\maxy);
\coordinate (R) at (0.83*\maxx, 0.105*\maxy);

\coordinate (Qhat) at (0.5*0.125*\maxx+0.5*0.7*\maxx , 0.5*0.635*\maxy+0.5*0.16*\maxy);
\coordinate (Q) at (1.32*0.5*0.125*\maxx+1.32*0.5*0.7*\maxx , 1.3*0.5*0.635*\maxy+1.3*0.5*0.16*\maxy);

\node[point,fill=white,scale=0.7] (Ppt) at (P) {};
\node[below left]       (Plb) at (P) {$P$};

\node[point,fill=white,scale=0.7] (Rpt) at (R) {};
\node[above right]      (Rlb) at (R) {$R$};

\coordinate (X1) at (0.10*\maxx, 1.00*\maxy);
\coordinate (X2) at (0.30*\maxx, 0.60*\maxy);
\coordinate (X3) at (0.60*\maxx, 0.55*\maxy);
\coordinate (X4) at (0.70*\maxx, 0.35*\maxy);
\coordinate (Xn) at (1.00*\maxx, 0.05*\maxy);

\draw[thick,name path=curve 1]
	(X1) to[out=-80,in=170]
	(X2) to[out=-10,in=135]
	(X3) to[out=-45,in=100]
	(X4) to[out=-80,in=180]
	(Xn);

\node[point,fill=black,scale=0.7] at (X1) {};
\node[point,fill=black,scale=0.7] at (Xn) {};

\coordinate (H1) at (-1,0);
\coordinate (H2) at (\maxx+1,0);
\coordinate (V1) at (0,-1);
\coordinate (V2) at (0,\maxy+1);
\coordinate (O) at (0,0);  
\coordinate (UR) at (1*\maxx,1*\maxy);  

\draw[->] (H1) -- (H2);
\draw[->] (V1) -- (V2);

\node[right] (T) at (H2) {\footnotesize $\J_1$};
\node[below left] (D) at (V2) {\footnotesize $\J_2$};

\draw[extended line, dashed, gray,name path=tangent] (P) -- (R);
\draw[dotted, black,name path=ray] (O) -- (UR);
\node[above]  (URar) at (UR) {$\J_1 = \J_2$};

\fill[point,fill=none,scale=0.7, name intersections={of=tangent and ray,total=\t}]
  (intersection-1) circle (5pt) node {};
  \node[below]      (Slb) at (intersection-1) {$S$};

\fill[point,fill=none,scale=0.7, name intersections={of=curve 1 and ray,total=\t}]
  (intersection-1) circle (5pt) node {};
  \node[above]      (Qlb) at (intersection-1) {$Q$};


\end{tikzpicture}
\\
(A)  & (B)
\end{tabular}
\end{center}
\caption{ \footnotesize
	\textsc{(a)} Convex smooth Pareto Front
	with a point $Q$ corresponding to the worst case optimal $\Os = (\lambda_1, \lambda_2) \in [0,1]^2$.
	The line perpendicular to $\Os$ is tangent to PF at $Q$.
	If any part of PF fell below it, the path corresponding to $Q$ would not be $\Os$-optimal. The dotted line is the central ray (where $\J_1 = \J_2)$).
	\textsc{(b)} Non-convex smooth Pareto Front. Points $P$ and $R$ correspond to 2 different $\Os$-optimal paths.
	The portion of PF between $P$ and $R$ (including the worst-case optimal point $Q$) cannot be found by scalarization. 
	Point $S$, found as a convex combination of $P$ and $R$, is average-case optimal.
}
\label{fig:pf_scal}
\end{figure}

This is an important drawback since in many optimal control problems the non-convex
parts of PF are very common and equally important.  An alternative approach was developed in  \cite{KumarVlad} to address this limitation and produce the entire PF for all $\x \in \domain$ simultaneously.
The method is applicable for any number of observer positions, but to simplify the notation we explain it here for $r=2$ only.
We expand the state space to $\domain_e = \domain \times [0, B]$ and define
the new value function $w(\x, b) = \inf \J_1(\x, \ba(\cdot))$, with the
infimum taken over all controls satisfying $\J_2(\x, \ba(\cdot)) \leq b$.
Thus, $b$ is naturally interpreted as the current ``budget'' for the
secondary criterion. The value function is then recovered by solving an
augmented PDE
\begin{equation}
\min\limits_{\ba \in A}
\left\{
\K_1 (\x, \ba) + \nabla_{\x} w \cdot \Bf ( \x, \ba )
\, - \,
\K_2(\x, \ba) \frac{\partial w}{\partial b}
\right\}
\, = \, 0.
\label{eq:multiobject}
\end{equation}
The method in \cite{KumarVlad} uses a first-order accurate semi-Lagrangian discretization
\cite{Falcone_book} to compute the discontinuous viscosity solution of
\eqref{eq:multiobject} for a range of problems in multi-criterion path planning.
The method was later generalized to treat constraints on reset-renewable resources \cite{BudgetReset}.
The same approach was also adapted to Probabilistic RoadMap graphs and field-tested on
robotic platforms at the United Technologies Research Center \cite{UTRC_2}.

Aside from approximating the entire PF, the key computational advantage is the {\em explicit causality}:
since $\K_2$ is positive, all characteristics are monotone in $b$ and methods similar to the explicit
``forward marching'' in $b$-direction are applicable.  (I.e., the system of discretized equations
is trivially de-coupled.)  Of course, the main
drawback of the above idea is the higher dimensionality of $\domain_e$. For
$r$ observer locations, the scalarization approach \cite{MitchellSastry} requires
solving $(r+1)$ PDEs on $\domain \subset \R^d$, but the parameter space
$\Lambda_r$ is $(r-1)$-dimensional. In contrast, with $w(\x, b)$ there are no parameters,
but the computational domain is $(d+r-1)$-dimensional. Several techniques for restricting the computations to a relevant part of $\domain_e$
were developed in \cite{KumarVlad},
but the computational cost and memory requirements are still significantly higher than for any (single)
HJB-solve in $\domain$.

\subsection{Different notions of adversarial optimality}\label{sec:2_approaches}
The Pareto Front allows us to answer one version of the surveillance-evasion problem: if the evader is completely risk-averse, he may choose the {\em worst-case optimal} strategy. That is, E will pick a control $\ba_W ( \cdot )$ that minimizes the observability from its ``worst'' observer position $\hat{\x}_i$:
\[
\max_{\hat{\x}_i \in \Op} \J_i (\ba_W ( \cdot ))  \; \leq \;  \max_{\hat{\x}_i \in \Op} \J_i (\ba ( \cdot )), \qquad \forall \ba(\cdot) \in \Ep.
\]
This corresponds to the version of the problem where E is forced to ``go first", with O selecting the maximizing $\hat{\x}_i \in \Op$ in response.
The following result shows that the
intersection of Pareto Front with
the ``central ray" (i.e., the line where $\J_1 = \J_2 \dots = \J_r$) yields the worst-case optimal strategy for E:
\begin{theorem} \label{thm:worstcase}
  If $\ba_{=}( \cdot )$ is a Pareto-optimal control satisfying $\J_i ( \ba_{=} ( \cdot ) ) = \J_j ( \ba_{=} ( \cdot ) )$ for all $i,j \in \{ 1, \cdots r \}$, then $\ba_{=}(\cdot)$ is also worst-case optimal.
\end{theorem}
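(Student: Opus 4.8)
The plan is to argue by contradiction, using the Pareto-optimality of $\ba_{=}(\cdot)$ to exclude the existence of a strictly better worst-case alternative. First I would introduce the common value $c := \J_1(\ba_{=}(\cdot)) = \cdots = \J_r(\ba_{=}(\cdot))$, which is well-defined precisely by the hypothesis that all components of $\bJ(\ba_{=}(\cdot))$ coincide; since $\Op$ is finite this gives immediately $\max_{\hat{\x}_i \in \Op} \J_i(\ba_{=}(\cdot)) = c$. Geometrically, this is the statement that the cost vector $\bJ(\ba_{=}(\cdot))$ lies on the central ray $\{\J_1 = \cdots = \J_r\}$, and $c$ is its (equal) coordinate value.

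Next, suppose toward a contradiction that $\ba_{=}(\cdot)$ is \emph{not} worst-case optimal. Then there exists an admissible control $\ba(\cdot) \in \Ep$ with $\max_{\hat{\x}_i \in \Op} \J_i(\ba(\cdot)) < c$. Because the maximum over the finitely many observer positions is strictly below $c$, every individual cost satisfies $\J_i(\ba(\cdot)) < c = \J_i(\ba_{=}(\cdot))$ for all $i \in \{1, \dots, r\}$. This is exactly the statement that $\ba(\cdot)$ dominates $\ba_{=}(\cdot)$ in the sense of \cref{sec:MultiObjective} (a weak, in fact strict, inequality in every coordinate), which contradicts the assumed Pareto-optimality of $\ba_{=}(\cdot)$. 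Hence no such $\ba(\cdot)$ exists, so $\max_{\hat{\x}_i \in \Op} \J_i(\ba(\cdot)) \geq c = \max_{\hat{\x}_i \in \Op} \J_i(\ba_{=}(\cdot))$ for every $\ba(\cdot) \in \Ep$; taking $\ba_W(\cdot) = \ba_{=}(\cdot)$ this is precisely the definition of worst-case optimality.

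I do not expect a genuine obstacle here, since the argument reduces to a one-line contradiction, but it is worth being explicit about where the two hypotheses enter essentially: the ``equal components'' assumption is what pins $\max_i \J_i(\ba_{=}(\cdot))$ down to the common value $c$, and Pareto-optimality is what forbids a control that improves on $\ba_{=}(\cdot)$ with respect to every criterion at once. Dropping either hypothesis invalidates the conclusion, so the only care needed in the write-up is the elementary step that a finite maximum being $< c$ forces each term to be $< c$ — the passage that converts the worst-case inequality into the componentwise (domination) inequality.
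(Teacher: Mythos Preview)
Your argument is correct and essentially identical to the paper's own proof: both assume a control with strictly smaller worst-case cost, use the equal-components hypothesis to turn this into strict componentwise domination, and derive a contradiction with Pareto-optimality. The only difference is cosmetic (you name the common value $c$ explicitly and add some commentary on where each hypothesis is used).
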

 \begin{proof}
   Suppose there exists $\ba ' ( \cdot )$ s.t.
   \[
   \max_{\hat{\x}_i \in \Op} \J_i ( \ba ' ( \cdot ) )  \; < \;  \max_{\hat{\x}_i \in \Op} \J_i ( \ba_{=} ( \cdot ) )
   \]
   then for all $j$ we have:
   \[
   \J_j (\ba' ( \cdot ))  \leq \max_{\hat{\x}_i \in \Op} \J_i (\ba' ( \cdot )) < \max_{\hat{\x}_i \in \Op} \J_i (\ba_{=} ( \cdot )) = \J_j (\ba_{=} ( \cdot )),
   \]
   which contradicts the Pareto-optimality of $\ba_{=}( \cdot).$
\end{proof}

\begin{figure}[tbhp]
\centering
\hspace{-0.4cm}
\subfloat[]{ \label{fig:obs1a}
\hspace{-0.4cm}
\begin{minipage}[b]{.285\textwidth}
  \vspace{-2cm}
\includegraphics[width=1\textwidth,keepaspectratio]{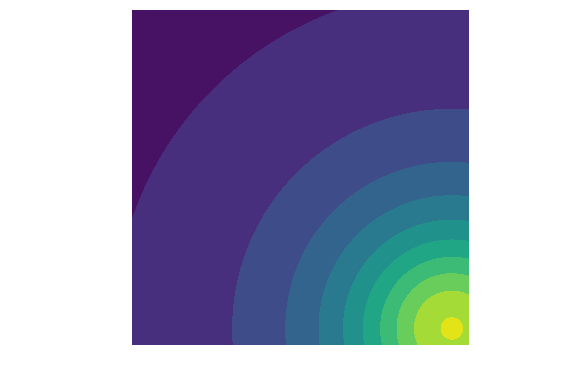} \\
\includegraphics[width=1\textwidth,keepaspectratio]{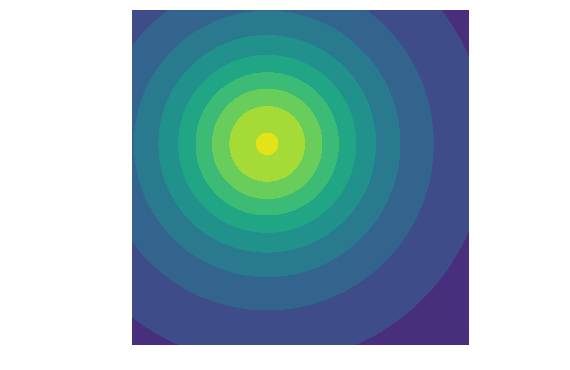}
\end{minipage}
}
\hspace{-0.55cm}
\subfloat[]{\label{fig:obs1b}
\includegraphics[width=0.375\linewidth,keepaspectratio]{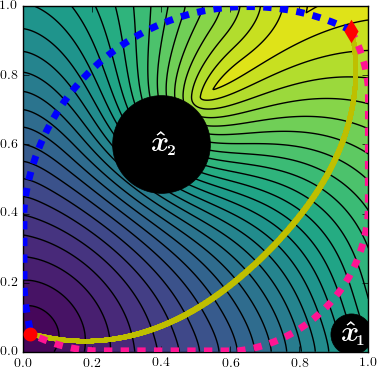}
}
\vspace{0.1cm}
\subfloat[]{\label{fig:obs1c}
  \begin{overpic}[width=0.355\linewidth,keepaspectratio]{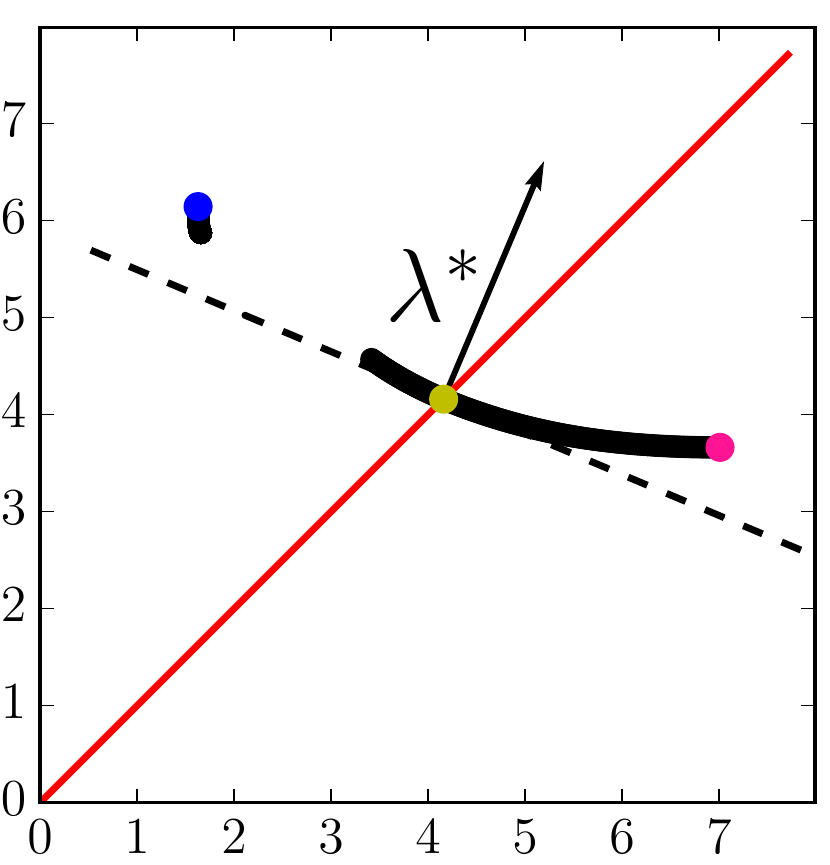}
  \put(-8,45) {\rotatebox{90}{$\J_2$}}
  \put(50,-2) {{$\J_1$}}
  \vspace{0.3cm}
\end{overpic}
\vspace{0.3cm}
}
\caption{
    (a) Two observer positions and the corresponding observability maps $\K_i$.
    (b) The $\Os^*$-optimal path corresponding to $\Os^* \approx (0.30, 0.70)$
    is shown in yellow over the level sets of $u^{\Os^*}$.
    The radii of black disks centered at $\xhat_i'$s are proportional to the corresponding components of $\bm{\Os^*}$. The two best response trajectories used when O chooses $\hat{\x}_1$ or $\hat{\x}_2$ are shown in blue and pink respectively. The trajectory in yellow is worst-case optimal for the evader as it is equally observable from both locations.
    (c) The convex part of Pareto Front (computed using the scalarization approach) intersects the ``central ray'' ($\J_1 = \J_2,$ shown in red).
    The worst-case optimal vector $\bm{\Os^*}$ is orthogonal to PF at the point of intersection (in yellow), whose coordinates correspond to the partial costs of the optimal path.
  The probability distribution $\Os^*$, together with the yellow trajectory form a Nash equilibrium of the strategic game between the evader and the observer described in~\cref{sec:ObserverEvader}.
  See~\cref{sec:examples} for additional information and parameters used.
}
\label{fig:observability1}
\end{figure}

\begin{figure}[tbhp]
\centering
\hspace{-0.4cm}
\subfloat[]{ \label{fig:NoObstaclesHolea}
\hspace{-0.4cm}
\begin{minipage}[b]{.285\textwidth}
  \vspace{-2cm}
\includegraphics[width=1\textwidth,keepaspectratio]{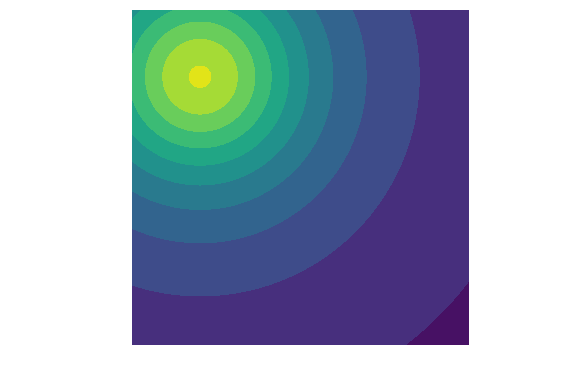} \\
\includegraphics[width=1\textwidth,keepaspectratio]{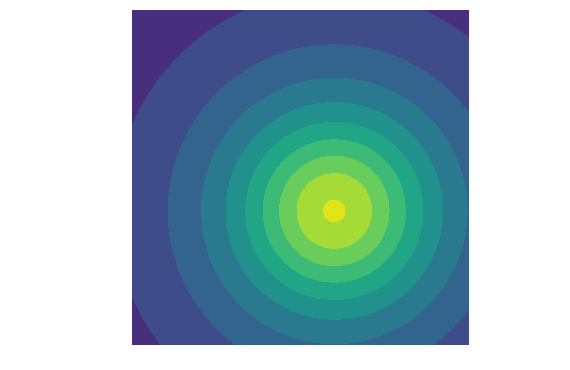}
\end{minipage}
}
\hspace{-0.55cm}
\subfloat[]{\label{fig:NoObstaclesHole2b}
\includegraphics[width=0.375\linewidth,keepaspectratio]{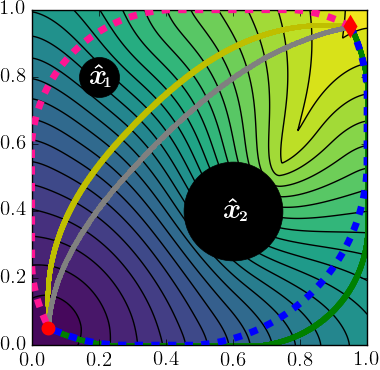}
}
\vspace{0.1cm}
\hspace{-0.15cm}
\subfloat[]{\label{fig:NoObstaclesHole2c}
\vspace{-0.1cm}
  \begin{overpic}[width=0.360\linewidth,keepaspectratio]{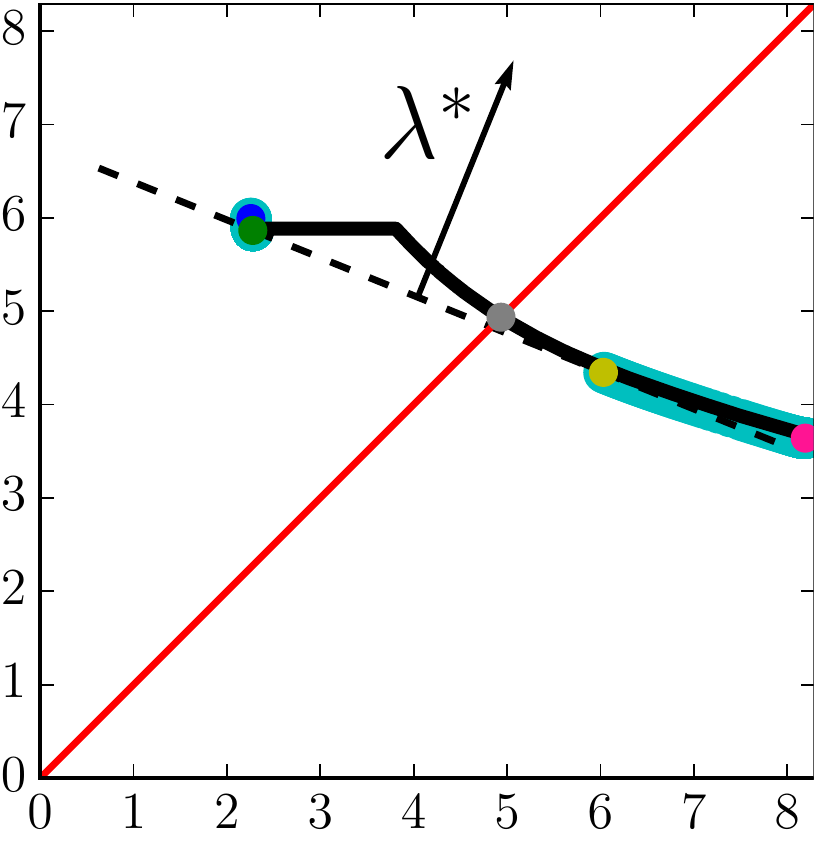}
  \put(-8,45) {\rotatebox{90}{$\J_2$}}
  \put(55,-5) {{$\J_1$}}
  \vspace{0.3cm}
\end{overpic}
\vspace{0.3cm}
}
\caption{
    (a) Two observer positions and the corresponding observability maps $\K_i$.
    (b) Two $\Os^*$-optimal trajectories corresponding to $\Os^* \approx (0.29, 0.71)$
    are shown in yellow and green over the level sets of $u^{\Os^*}$.
  The two best response trajectories used when O chooses $\hat{\x}_1$ or $\hat{\x}_2$ are shown in blue and pink respectively.
  The worst-case optimal trajectory is plotted in gray.
    (c) The convex part of the Pareto Front (in cyan) computed using the scalarization approach, and the whole Pareto Front (in black) computed using the method in~\cite{KumarVlad}.
  The convex part of the Pareto Front does not intersect the central ray (shown in red). The worst-case optimal strategy (in gray) lies on the non-convex part of the Pareto Front and thus cannot be computed using scalarization.
  The Nash equilibrium pair of strategies consists of using positions $\hat{\x}_1$ and $\hat{\x}_2$ with probabilities $\Os^*$ for O
   and using the yellow and green trajectories (both of which lie on the convex part of the PF) with probability $\left[ p( \text{yellow}),p(\text{green} ) \right]=\left[ 0.29, 0.71 \right] $ for E (see~\cref{sec:ObserverEvader}).  The latter mixed strategy is average-case optimal for E.
  See~\cref{sec:examples} for additional information and parameters used.}
\label{fig:NoObstaclesHole}
\end{figure}

The corresponding vector of costs $\bJ(\ba_{=}( \cdot ))$ may lie on the convex portion of PF, as in Figures~\ref{fig:pf_scal}(A) and \ref{fig:observability1}, in which case  $\ba_W = \ba_{=}$ can be found by scalarization \cite{MitchellSastry}.
But if $\bJ(\ba_{=}( \cdot ))$ lies on the non-convex portion of PF, as in Figures~\ref{fig:pf_scal}(B) and \ref{fig:NoObstaclesHole}, the computational cost of finding the evader's worst-case optimal strategy grows exponentially with $r$ as it involves solving a non-linear differential equation in $(r+d-1)$ dimensions \cite{KumarVlad}.
As it will be shown in sections  \ref{sec:ObserverEvader}-\ref{sec:MultipleEvaders},
the latter scenario is particularly common on domains with obstacles.

Luckily, another interpretation of evader's objectives proves much more computationally tractable.
Even though $\ba_{=} ( \cdot )$ yields the lowest {\em worst-case} observability that E can achieve if he must choose a single control function deterministically, E might be able to attain an even lower expected (or {\em average-case}) observability if he switches to ``mixed policies'', choosing a probability distribution over a set of Pareto optimal controls.
This is illustrated in~\cref{fig:pf_scal}(B): by choosing probabilistically a path corresponding to the point P and another corresponding to point R, E obtains a new point S on the central ray, whose expected observability is lower than for the worst-case optimal Q regardless of O's selected location.  This, of course, assumes that O's location is selected without knowing in advance which of the two paths will be used by E.
Indeed, for any single run from $\x_S$ to $\x_T$, the {\em worst-case} observability of this probabilistic policy is based on the worst cases for P and R, which (from the point of view of a completely risk-averse evader) would make the average-case optimal S inferior to the worst-case optimal Q.  This scenario is fully realized in \cref{fig:NoObstaclesHole}, where $J_1(\ba_{=}( \cdot )) = J_2(\ba_{=}( \cdot )) \approx 4.94$, the expected observability corresponding to the optimal ``probabilistic mix'' of yellow and green trajectories is $\approx 4.83$, but the worst case associated with this mixed policy is
$J_1(\text{yellow}) \approx 6.03$.

We note that O could also consider using a mixed strategy. In this case, $\K^{\Os}$ can be interpreted as the expected pointwise observability when using the probability density $\Os \in \Om$ over the positions $\Op$. Similarly $\J^{\Os}(\ba( \cdot ) )$ is the expected cumulative observability when using the control function $\ba ( \cdot )$.
\Cref{fig:pf_scal} shows that when we are interested in the average-case optimal behavior for both O and E, we
only need to consider a convex hull of PF (denoted $co(\text{PF})$), and the scalarization is thus adequate.  Note that in~\cref{fig:observability1,fig:NoObstaclesHole,fig:observability2}, the set $co(\text{PF})$ was approximated by imposing a fine grid on $\Om$ and re-solving \cref{eq:WeightedCostEikonal} for each sampled $\Os.$  Since we only care about the intersection of $co(\text{PF})$ with the central ray, this procedure is wasteful -- and prohibitively expensive for high $r$.
In the next section, we consider the case where both E and O optimize the expected/average-case performance by reformulating this as a semi-infinite strategic zero-sum game.  We show that such Surveillance-Evasion Games (SEGs) can be solved through scalarization combined with convex optimization, without approximating the (convex hull of the) entire Pareto Front.

\begin{remark}
Up till now, our geometric interpretation in~\cref{fig:observability1,fig:NoObstaclesHole,fig:observability2} assumed that either PF or at least the $co(\text{PF})$ must intersect the central ray.  If this is not the case, O will avoid using some of his positions.
E.g.,~\cref{fig:observability3} shows the pink and yellow trajectories corresponding to $\ba_1(\cdot)$ and $\ba_2(\cdot)$, which are optimal with respect to the observer positions $\hat{\x}_1$ and $\hat{\x}_2$.
Since $\J_1( \ba_2(\cdot) )\, \leq  \, \J_2( \ba_2(\cdot)),$ the E's worst-case for $\ba_2(\cdot)$ is actually the observer location $\hat{\x}_2$.
A generalization of this scenario for $r>2$ is covered in~\cref{thm:main}.
\end{remark}

\begin{figure}[tbhp]
\centering
\hspace{-0.4cm}
\subfloat[]{ \label{fig:obs3a}
\hspace{-0.4cm}
\begin{minipage}[b]{.27\textwidth}
  \vspace{-2cm}
\includegraphics[width=1\textwidth,keepaspectratio]{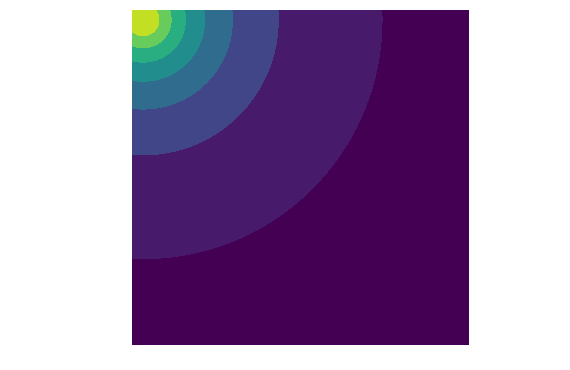} \\
\includegraphics[width=1\textwidth,keepaspectratio]{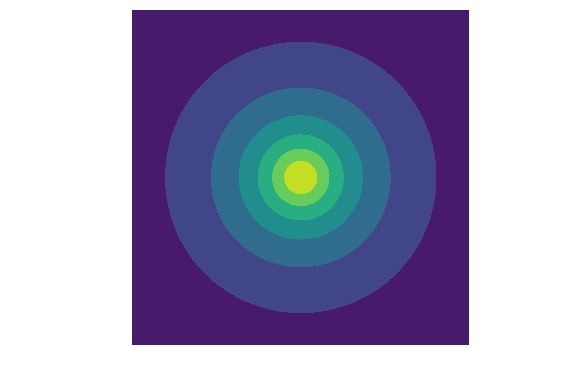}
\end{minipage}
}
\hspace{-0.55cm}
\subfloat[]{\label{fig:obs3b}
\includegraphics[width=0.360\linewidth,keepaspectratio]{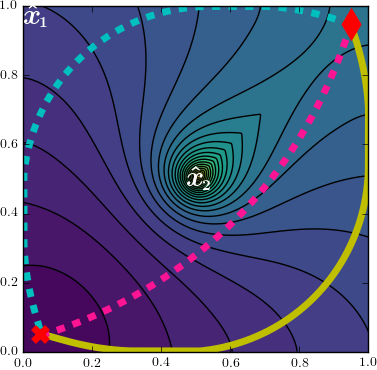}
}
\subfloat[]{\label{fig:obs3c}
  \begin{overpic}[width=0.355\linewidth,keepaspectratio]{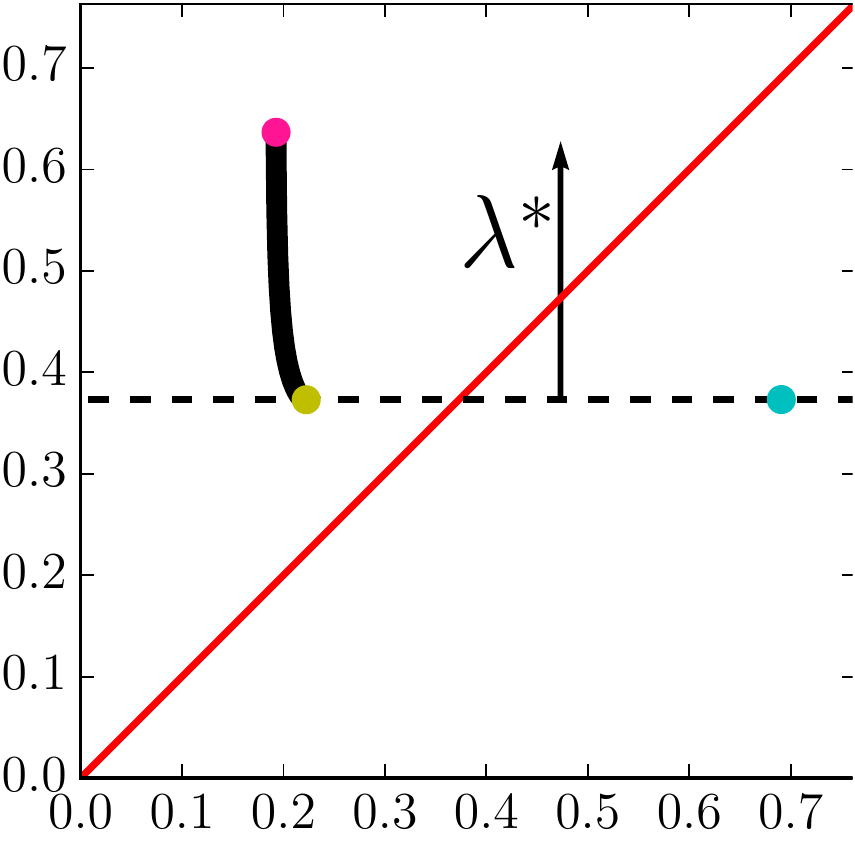}
  \put(-6,44) {\rotatebox{90}{$\J_2$}}
  \put(55,-5) {{$\J_1$}}
  \vspace{0.3cm}
\end{overpic}
\vspace{0.5cm}
}
\caption{ \footnotesize
    (a) Two observer positions and the corresponding observability maps $\K_i$ plotted in logarithmic scale.
    (b) The value function $u^{\Os^*}$ at $\Os^* = (0,1) $. The worst-case optimal strategy for O is the yellow trajectory, but both the yellow trajectory and the light blue trajectories are $\Os^*$-optimal. The pink trajectory is the best response when the observer uses position $\hat{\x}_1$.
    (c)  The Pareto Front does not intersect the central ray.
    The worst-case optimal trajectory is the one point on the Pareto Front that is closest to the central ray: the yellow point. The blue point is $\Os^*$-optimal but it is not Pareto optimal as it is dominated by the yellow point.
  The Nash equilibrium strategy consists of the position $\hat{\x}_2$ for O, and the yellow trajectory for E (see~\cref{sec:ObserverEvader}).
  See~\cref{sec:examples} for additional information and parameters used.
}
\label{fig:observability3}
\end{figure}


\section{Surveillance-Evasion Games (SEGs)} \label{sec:ObserverEvader}
In this section, we reformulate the problem of evasive path planning under surveillance uncertainty as a strategic game.
This can model either the actual adversarial interactions between two players or the risk-averse logic of the evader even if the surveillance patterns are not likely to change in response to that evader's strategy.  (The latter case is typically interpreted as a ``game against nature''.)

We assume that the evader is attempting to minimize (while the observer is attempting to maximize) the total expected observability integrated over E's trajectories and dependent on O's positions.
We further assume that O is aware of E's initial location $\x_S$ and its target location $\x_T$ but not of the trajectories chosen by E. Similarly, E is aware of the predefined locations of O, but not of the realized positions chosen by O.
This game may be stated deterministically or stochastically. In the deterministic case, each player chooses a single pure strategy.
That is, the observer chooses a single location $\hat{\x}_i \in \Op$ and the evader chooses a single control function $\ba ( \cdot ) \in \Ep$.
In the probabilistic setting, each player chooses a mixed strategy, i.e., a probability distribution over the pure strategies. In other words, O chooses a probability distribution $\Os \in \Om$ over positions and E chooses a probability distribution $\Es \in \Em$ over control functions.
The mixed strategy $\Os$ of the observer can be interpreted in several different ways:
\begin{enumerate}
\item O chooses a single position $\hat{\x}_i$ according to the probability distribution $\Os$ before E starts moving, and remains at that position until the end of the round (that is, until E reaches the target).
\item O can randomly teleport between its positions at any time, and each $\lambda_i$ reflects the proportion of time spent at the corresponding position $\hat{\x}_i$.
\item O has a budget of ``observation resources", and $\Os$ reflects the fraction of these resources spent at each location.
In this case,  $K_i$ reflects the pointwise observability corresponding to 100\% of resources allocated to the position $\hat{\x}_i$.
\end{enumerate}
Since we assume that neither player has access to the realization of the opponent's strategy in real time, these three interpretation are equivalent (and lead to the same optimal strategies) in our context.
The payoff function of the game is the cumulative expected observability, and can be expressed as $P( \Os,  \Es ) = \mathbb{E}_{\Es} \left[ \J^{\Os} ( \ba ( \cdot ) )\right]$ where $\mathbb{E}_{\Es} \left[ \cdot \right]$ denotes the expectation over the mixed strategy $\Es$.

This SEG is a two-player \textit{zero-sum game}~\cite{osborne1994course}, as each player's gains or losses are exactly balanced by the losses or gains of the opponent. Furthermore, it is \textit{semi-infinite} as the set of pure strategies for O is a finite number $r$, whereas the set of pure strategies for E is uncountably infinite. A central notion of solution for strategic games is a \text{Nash equilibrium}~\cite{osborne1994course}, a pair of strategies for which neither player can improve his payoff by unilaterally changing his strategy. That is, a pair of strategies $ ( \Os^* , \Es^* )$ is a Nash equilibrium if both of the following conditions hold:
\begin{equation} \label{eq:Nash}
  \begin{aligned}
P(\Os^*, \Es^*  ) \leq  P(\Os^* , \Es ) & \text{ for all }  \Es \in \Em \ , \\
P(\Os^*, \Es^*  ) \geq  P(\Os , \Es^*) & \text{ for all }  \Os \in \Om \ .
\end{aligned}
\end{equation}

A pure strategy Nash equilibrium does not always exist, therefore we focus on finding a mixed strategy Nash equilibrium. In our setting, the minimax theorem for semi-infinite games~\cite{raghavan1994zero} assures that a mixed strategy Nash equilibrium $( \Os^*, \Es^*)$ exists, that all Nash equilibria have the same payoff, and that they are attained at the minimax (which is also equal to the maximin):
\begin{equation} \label{eq:mixed}
 P(\Os^*, \Es^*) = \min_{  \Es \in \Em }  \max_{ \Os \in \Om } \mathbb{E}_{\Es} \left[ \J^{\Os}( \ba ( \cdot ) )  \right] = \max_{ \Os \in \Om } \min_{  \Es \in \Em }  \mathbb{E}_{\Es} \left[ \J^{\Os}( \ba ( \cdot ) )  \right] \ .
\end{equation}
Although $\Es$ is a probability distribution over the uncountable set $\Em$, there always exists an optimal mixed strategy $\Es^*$ which is a mixture of at most $r$ pure strategies, where $r$ is the maximum number of positions for the observer~\cite{raghavan1994zero}. In fact, it is easy to show that there will always exist a Nash equilibrium
$( \Os^*, \Es^*)$ with the number of pure strategies used in $\Es^*$ not exceeding the number of non-zero components in $\Os^*.$

In the case of finite two-player zero-sum games, computing the Nash equilibrium is easily achieved by linear programming. For our SEGs, the challenge in computing a Nash equilibrium arises from enumerating the control functions $\ba ( \cdot ) \in \Ep$ which are part of E's mixed strategy. Indeed, we do not possess a useful parametrization
of the set of control functions $\Ep$, and our only computational kernel to generate a single $\Os$-optimal control function $\ba^{\Os} ( \cdot )$ is to solve the weighted-cost Eikonal equation in~\cref{eq:WeightedCostEikonal}.
For that reason, our solution strategy to compute the Nash Equilibrium involves two steps:
\begin{enumerate}
  \item Find an approximate optimal strategy of the observer $\Os^*$ using convex optimization (see~\cref{sec:Ostrategy}).
  \item Find an approximate optimal strategy of the evader $\Es^*$ by generating near-optimal control functions (see~\cref{sec:Estrategy}).
\end{enumerate}

\subsection{Optimal strategy of the Observer}\label{sec:Ostrategy}

In order to compute an optimal strategy $\Os^*$ of the observer, we consider the following problem:
\begin{equation} \label{eq:pureE}
\max_{\Os \in \Om } \min_{ \ba( \cdot ) \in \Ep } \J^{\Os}\left( \x_S,\bap  \right) = \max_{\Os \in \Om} u^{\Os} ( \x_S)\ .
\end{equation}
For any fixed strategy $\Os$ for O, the inner minimization represents the optimal response of player E to that fixed strategy. Therefore, the maximin problem answers the question: what is the optimal strategy for O given that E gets to observe that strategy and respond? We call this problem the \textit{E-response} problem.
Note that although E could use a mixed strategy, there always exists a pure strategy which is optimal. That is:
\begin{equation} \label{eq:PureEqualsMixed}
  \min_{  \theta \in \Em }  \mathbb{E}_{\theta} \left[ \J^{\Os}( \ba ( \cdot ) )  \right] =  \min_{ \ba( \cdot )\in  \Ep } \J^{\Os}\left( \ba( \cdot ) \right) \ .
\end{equation}
This implies that any optimal $\Os$ for \cref{eq:pureE} is also an optimal $\Os $ for~\cref{eq:mixed}. Consequently, the optimal $\Os$ for~\cref{eq:pureE} is one half of a Nash equilibrium pair. However, the optimal pair $( \Os, \ba ( \cdot )) $ of~\cref{eq:pureE} is not a Nash equilibrium, except in a specific situation described in the following theorem.
\begin{theorem} \label{thm:purenash}
 Suppose there exists $ \Os_{=} \in \Om$ with associated $\Os_{=}$-optimal control function $\ba^{\Os_{=}} ( \cdot ) $ which satisfies $\J_i (\ba^{\Os_=} ( \cdot ) ) = \J_j ( \ba^{\Os_=} ( \cdot ) )$ for all $i,j \in \{ 1, \dots , r \}$, then $( \Os_=, \ba^{\Os_=}( \cdot ) ) $ is a Nash equilibrium.
\end{theorem}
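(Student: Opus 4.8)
The plan is to verify the two defining inequalities of a Nash equilibrium in \cref{eq:Nash} directly, with $\Os^* = \Os_=$ and $\Es^*$ equal to the pure strategy (Dirac mass) concentrated on $\ba^{\Os_=}(\cdot)$, so that $P(\Os, \Es^*) = \J^{\Os}(\ba^{\Os_=}(\cdot))$ for every $\Os \in \Om$. The whole argument rests on one elementary observation, explained below, so no hard analysis is expected.

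First I would handle the inequality $P(\Os_=, \Es^*) \le P(\Os_=, \Es)$ for all $\Es \in \Em$. By definition, $\ba^{\Os_=}(\cdot)$ is $\Os_=$-optimal, i.e.\ it minimizes $\ba(\cdot) \mapsto \J^{\Os_=}(\x_S, \ba(\cdot))$ over $\Ep$; hence $\J^{\Os_=}(\ba^{\Os_=}(\cdot)) \le \J^{\Os_=}(\ba(\cdot))$ for every control $\ba(\cdot) \in \Ep$. Taking the expectation of this pointwise inequality with respect to an arbitrary mixed strategy $\Es$ gives $P(\Os_=, \Es^*) = \J^{\Os_=}(\ba^{\Os_=}(\cdot)) \le \mathbb{E}_{\Es}[\J^{\Os_=}(\ba(\cdot))] = P(\Os_=, \Es)$, which is exactly the first condition.

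Next I would handle $P(\Os_=, \Es^*) \ge P(\Os, \Es^*)$ for all $\Os \in \Om$. Write $c := \J_i(\ba^{\Os_=}(\cdot))$, the common value of all the partial costs along the equalizing control. Then for any $\Os = (\lambda_1, \ldots, \lambda_r) \in \Om$ we have $P(\Os, \Es^*) = \J^{\Os}(\ba^{\Os_=}(\cdot)) = \sum_{i=1}^r \lambda_i \J_i(\ba^{\Os_=}(\cdot)) = c \sum_{i=1}^r \lambda_i = c$, using $\sum_i \lambda_i = 1$. In particular this holds for $\Os_=$ itself, so $P(\Os_=, \Es^*) = c = P(\Os, \Es^*)$ for every $\Os$, which gives the (in fact, an equality in place of the) second condition. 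Both conditions in \cref{eq:Nash} being satisfied, $(\Os_=, \ba^{\Os_=}(\cdot))$ is a Nash equilibrium.

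The only ``obstacle'' worth flagging is conceptual rather than technical: the second inequality would fail for a generic $\Os$-optimal control, and it is precisely the equalizer hypothesis $\J_i(\ba^{\Os_=}(\cdot)) = \J_j(\ba^{\Os_=}(\cdot))$ that makes the observer indifferent among all mixed strategies against $\ba^{\Os_=}(\cdot)$, so that O has no profitable unilateral deviation. One may also remark that this is consistent with \cref{thm:worstcase}: such an $\ba^{\Os_=}(\cdot)$, if Pareto optimal, is worst-case optimal for E, and here it additionally supports a pure-strategy Nash equilibrium of the SEG.
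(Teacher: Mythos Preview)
Your proof is correct and follows essentially the same approach as the paper's: the Evader's condition comes directly from $\Os_=$-optimality of $\ba^{\Os_=}(\cdot)$, and the Observer's condition comes from the observation that $\J^{\Os}(\ba^{\Os_=}(\cdot)) = \sum_i \lambda_i \J_i(\ba^{\Os_=}(\cdot))$ is independent of $\Os$ when all the $\J_i(\ba^{\Os_=}(\cdot))$ coincide. Your version is slightly more explicit about passing to mixed strategies via expectation, but the argument is the same.
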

 \begin{proof}
The fact that E cannot improve his payoff follows from the definition of $\ba^{\Os_=} \in \argmin_{\ba ( \cdot )} \J^{\Os_=}( \ba ( \cdot ) ) $. O may not improve his payoff either as for all $\Os$,
\[
\J^{\Os}( \ba^{\Os_=} ( \cdot ) ) = \sum \Os_i \J_i ( \ba^{\Os_=} ( \cdot ) ) = \sum \Os_{=,i} \J_i ( \ba^{\Os_=} ( \cdot ) ) = \J^{\Os_{=}}( \ba^{\Os_{=}} ( \cdot ) ) \ .
\]
\end{proof}
This situation corresponds to the case when the convex part of the Pareto Front intersects the central ray, such as in the example in~\cref{fig:observability1}.
\Cref{thm:worstcase} implies that in this case, the worst-case optimal strategy for E coincides with E's half of the Nash equilibrium.
Note that in general such a $\Os_{=}$ does not have to exist; e.g., in~\cref{fig:NoObstaclesHole} and~\cref{fig:observability2} the convex part of the Pareto Front does not intersect the central ray. In such situations, the worst-case optimal strategy for E and the Nash Equilibrium are different.
Moreover, the latter involves a mixed strategy for E covered in~\cref{sec:Estrategy}.

We now direct our attention to solving the E-response problem numerically.~\Cref{eq:pureE} may be stated as the following optimization problem:

\begin{align} \label{eq:maxG}
&\max_{\Os} G(\Os) \\
&\mbox{ s.t. } \lambda_i \geq 0, \quad \sum_{i=1}^r \lambda_i = 1 \ . \nonumber
\end{align}

The objective function $G(\Os) = \min_{\ba( \cdot ) \in \Ep } \sum_{i=1}^r \Os_i \J_i(  \ba ( \cdot ) )$ is concave as it is the pointwise minimum of linear functions.
Furthermore, the vector of individual cumulative costs $ \bJ( \ba^{\Os}(\cdot)  ) $, where $\ba^{\Os} ( \cdot)  \in \argmin_{\ba( \cdot ) \in \Ep } \J^{\Os}( \ba ( \cdot ) )$, is a supergradient of $G$ (denoted as  $ \bJ(  \ba^{\Os} ( \cdot ) )\in \partial G ( \Os )$). A supergradient provides an ascent direction of a concave function, i.e., $\w \in \partial G ( \Os ) $
 if for all  $\hat{\Os} \in \Om$,
\[
G(\hat{\Os}) - G(\Os) \leq \w^T(\hat{\Os} - \Os ) \ .
\]
The fact that $ \bJ(  \ba^{\Os}( \cdot ) )\in \partial G ( \Os )$ is seen from the following computation: for any $\hat{\Os}$,
\begin{align*}
G(\hat{\Os} ) - G(  \Os ) =  \left(  \min_{ \ba \in \Ep}\sum_{i=1}^r \hat{\lambda}_i \J_i(  \ba( \cdot) ) \right) - \sum_{i=1}^r  \lambda_i\J_i( \ba^{\Os}( \cdot) ) \\
 \leq   \sum_{i=1}^r \hat{\lambda}_i \J_i( \ba^{\Os}( \cdot) ) - \sum_{i=1}^r \lambda_i\J_i(  \ba^{\Os}( \cdot) ) \\
= \bJ(  \ba^{\Os}(\cdot) )^T (\hat{\Os} - \Os)   \ .
\end{align*}
Evaluating the vector $\bJ ( \ba^{\Os}( \cdot))$ can be challenging computationally; we show how this can be done in~\cref{sec:computingcosts}.
Once this ascent direction is known, one still needs to ensure that $\Os$ remains a feasible probability distribution over $\Op$, and we use the orthogonal projection operator $\Pi: \R^r \rightarrow \Om$.
The operator $\Pi$ can be computed in $\mathcal{O}(r \log r) $
 operations~\cite{brucker1984n, wang2013projection} as summarized in~\cref{alg:projection}.
The resulting projected supergradient method~\cite[Chap. 8]{beck2017first} is shown in~\cref{alg:supergradient}.
The iterates of~\cref{alg:supergradient} for the example from~\cref{fig:observability2} are illustrated in~\cref{fig:AlgIterates}.

\begin{algorithm}[h]
\begin{algorithmic}[1]
\caption{ Orthogonal projection onto the probability simplex } \label{alg:projection}
\STATE{ {\bf Input}  $\Os \in \mathbb{R}^r$ }
\STATE{Sort $\Os$ into $\bm{u}$: $u_1 \geq u_2\geq  \dots  \geq u_r $ }
\STATE{ Find $\rho = \max \{ 1 \leq j \leq r :  u_j + \frac{1}{j} \left( 1 -  \sum_{i=1}^j u_i \right) > 0 $\} }
\STATE{ $\tau \gets \frac{1}{\rho}\left( 1 - \sum_{i=1}^{\rho} u_i \right) $}
\RETURN $x$ s.t. $x_i = \max \{ \lambda_i + \tau, 0 \} $, $i = 1, \dots r $.
\end{algorithmic}
\end{algorithm}

\begin{algorithm}[h]
\begin{algorithmic}[1]
\caption{Projected supergradient method for finding the maximum of $G$ over the set $\Om$ }\label{alg:supergradient}
\STATE{{\bf Input} Initial guess $\Os_0$, stepsizes $\alpha_k $, number of iterations $n$}
\FOR{ $k = 0 : (n-1)$}
\STATE{Compute $G(\Os_k) = u^{\Os_k}(\x_S)$ and
find some $\bm{g} \in \partial G ( \Os_k) $ }
\STATE{ $\Os_{k+1} \gets \Pi \left( \Os_{k} + \alpha_k \bm{g} \right) $ }
\ENDFOR \\
\RETURN $\argmax\limits_{\Os \in \left\{\Os_0,\ldots, \Os_n \right\} } G(\Os)$
\end{algorithmic}
\end{algorithm}

\begin{figure}[tbhp]
\centering
\hspace{-0.4cm}
\subfloat[]{ \label{fig:obs2a}
\hspace{-0.4cm}
\begin{minipage}[b]{.285\textwidth}
  \vspace{-2cm}
\includegraphics[width=1\textwidth,keepaspectratio]{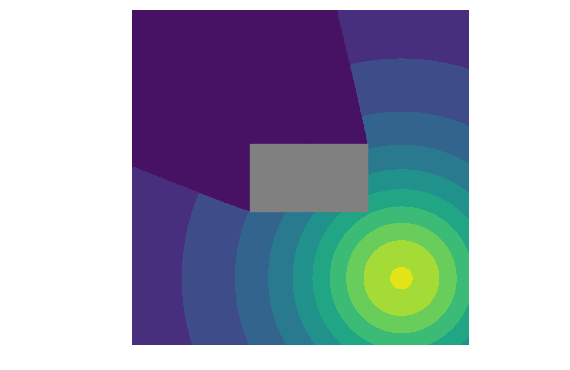} \\
\includegraphics[width=1\textwidth,keepaspectratio]{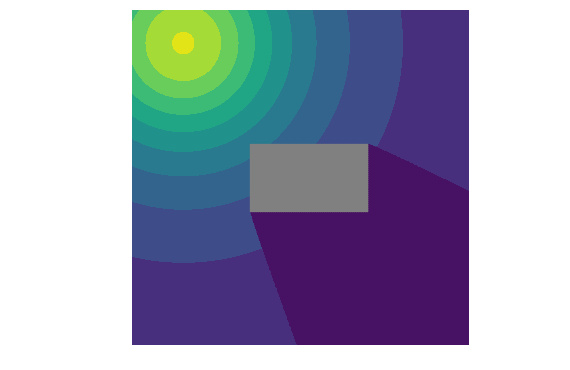}
\end{minipage}
}
\hspace{-0.55cm}
\subfloat[]{\label{fig:obs2b}
\includegraphics[width=0.38\linewidth,keepaspectratio]{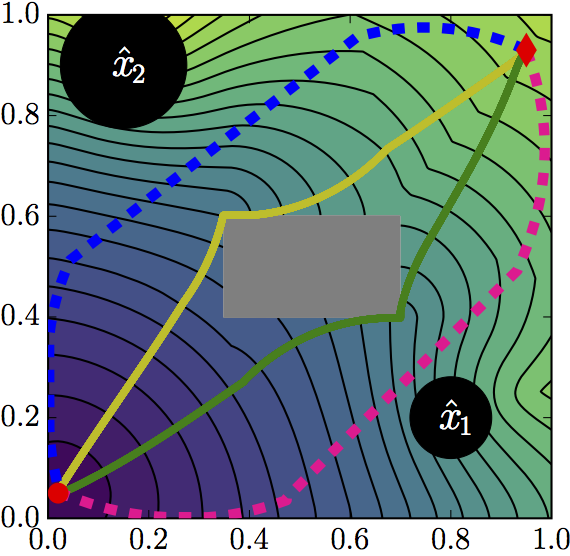}
}
\vspace{0.1cm}
\subfloat[]{\label{fig:obs2c}
  \begin{overpic}[width=0.35\linewidth,keepaspectratio]{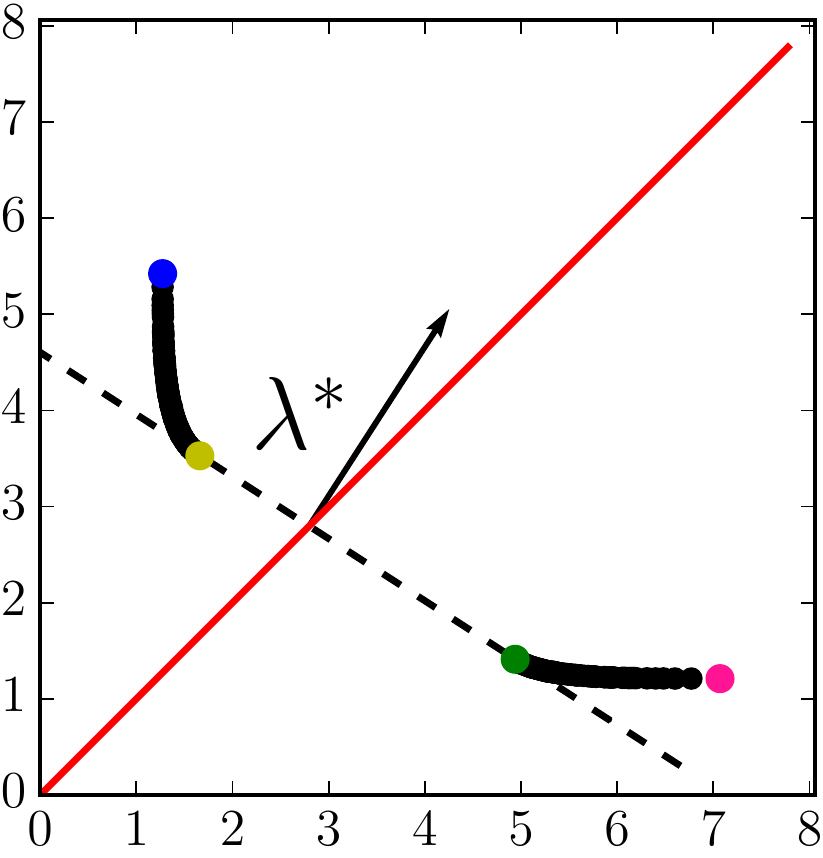}
  \put(-8,45) {\rotatebox{90}{$\J_2$}}
  \put(50,-2) {{$\J_1$}}
  \vspace{0.3cm}
\end{overpic}
\vspace{0.3cm}
}
\caption{
    (a) Two observer positions and the corresponding observability maps $\K_i$ on a domain with a single obstacle (shown in gray).
    (b) Two $\Os^*$-optimal trajectories corresponding to $\Os^* \approx (0.39, 0.61)$
    are shown in yellow and green over the level sets of $u^{\Os^*}$.
  The two best response trajectories used when O chooses $\hat{\x}_1$ or $\hat{\x}_2$ are shown in blue and pink respectively. The trajectories in yellow and green are not worst-case optimal for the evader but are used in E's mixed Nash equilibrium strategy.
    (c) The convex part of the Pareto Front does not intersect the central ray (shown in red).
    This is the same situation already observed in~\cref{fig:NoObstaclesHole}, but it is even more common on domains with obstacles.
  The Nash equilibrium pair of strategies consists of using positions $\hat{\x}_1$ and $\hat{\x}_2$ with probabilities $\Os^*$ for O,
   and using the yellow and green trajectories with probability $\left[ p( \text{yellow}),p(\text{green} ) \right]=\left[ 0.65,  0.35 \right] $ for E. 
  See~\cref{sec:examples} for additional information and parameters used.}
\label{fig:observability2}
\end{figure}

\begin{figure}[tbhp]
\centering
\subfloat[]{\label{fig:subgradienta}
\begin{overpic}[width=0.46\textwidth]{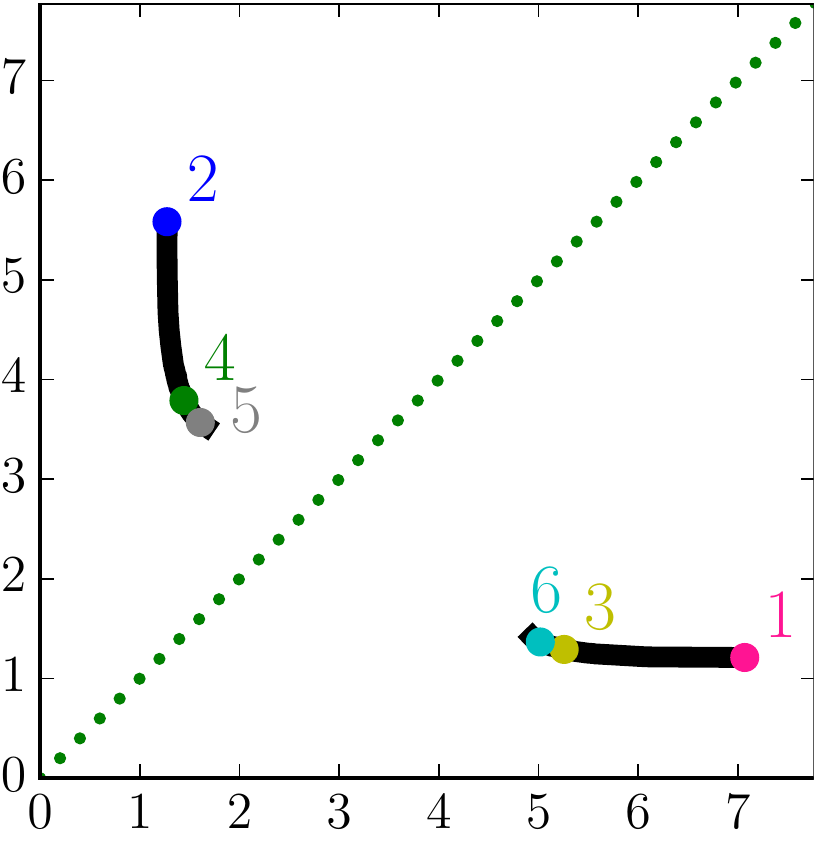}
  \put(-8,45) {\rotatebox{90}{$\J_2$}}
  \put(53,-3) {{$\J_1$}}
\end{overpic} }
\subfloat[]{\label{fig:sugradientb}
\includegraphics[width=0.5\textwidth]{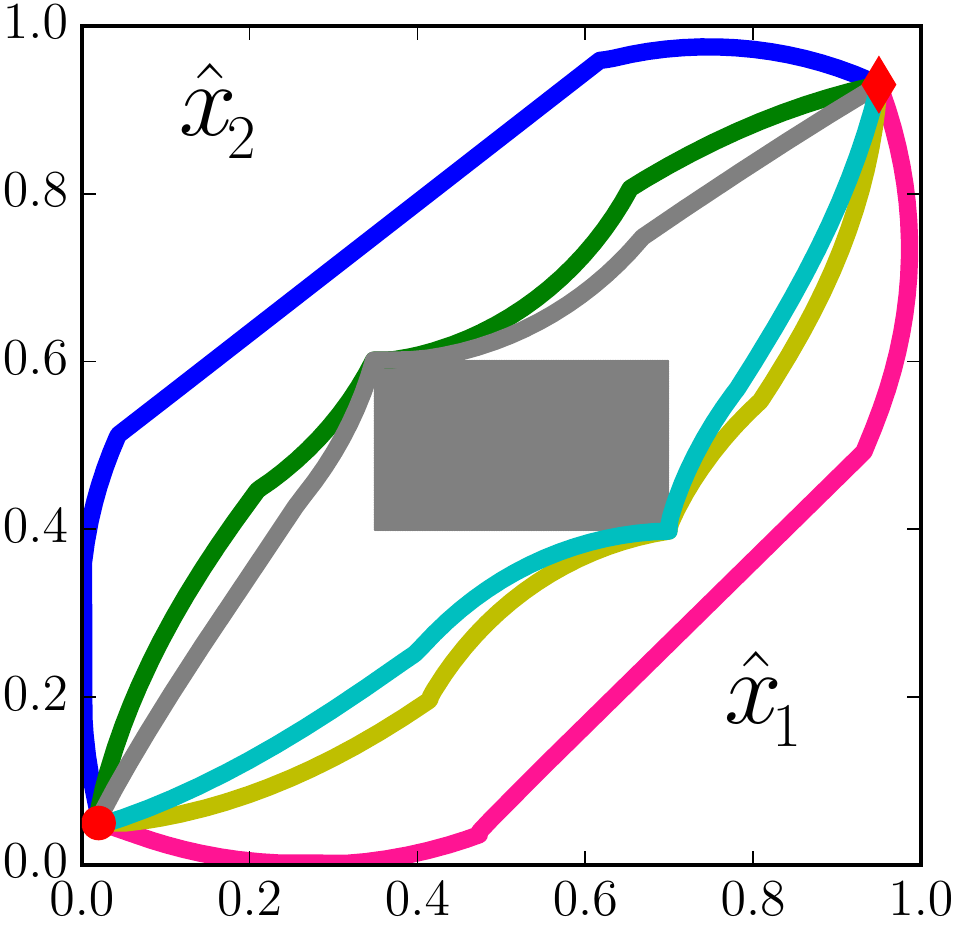}
 }
\caption{ \footnotesize
(a) Convex part of PF, and the individual costs of the first 6 iterates $\Os_k$ of~\cref{alg:supergradient} (with stepsizes $\alpha_k = 3 \big/  k$) for the problem shown in~\cref{fig:observability2}. (b) The $\Os_k$-optimal trajectories of the first 6 iterates.
We note that only a few iterates are needed to obtain trajectories which are close to the central ray. Thus, it does not require computing the whole PF which saves computational time.
}
\label{fig:AlgIterates}
\end{figure}

\subsection{ Optimal strategy of the Evader} \label{sec:Estrategy}
Computing the evader's half of the Nash equilibrium is more challenging due to the fact that the set of E's pure strategies, i.e., the set of control functions $\ba( \cdot )$ leading from the source $\x_S$ to the target $\x_T$, is uncountably infinite. We propose a heuristic strategy to approximate $\Es^*$ which relies on two properties of the Nash equilibrium in semi-infinite games:
\begin{enumerate}
\item There exists a Nash mixed strategy for E which uses only $r$ pure strategies\footnote{This result assumes that the set $S=\{ (s_1, s_2, \dots s_r) \mid s_i = P(\hat{\x}_i, \ba( \cdot )); i = 1,2,\dots ,r; \ba( \cdot )\in \Ep \} \subset \mathbb{R}^r $ is bounded and $co(S)$ is closed. In our case, $S$ is not bounded for the full set of control functions in $\Ep$ but becomes bounded if we restrict our attention to Pareto optimal control functions. }~\cite{raghavan1994zero}.
\item All pure strategies employed with positive probability in the Nash equilibrium have the same expected payoff, with the expectation taken over the other half of the Nash. In particular, all control functions used with positive probability in the Nash equilibrium are $\Os^*$-optimal.
\end{enumerate}

The following characterization of the Nash equilibrium helps us generate a good candidate set of $\Os^*$-optimal trajectories.

\begin{theorem}\label{thm:main}
  Let $( \Os^*, \Es^*)\in \Om \times \Em $ and $\mathcal{I} = \{ i \mid \lambda_i^* \neq 0 \}$. $( \Os^*, \Es^*)$ is a Nash equilibrium if and only if the following three conditions hold:
\begin{enumerate}
  \item $\Os^*$ is a constrained maximizer of $G(\Os)$ in ~\cref{eq:maxG}, \label{thm:main1}
  \item if $i \in \mathcal{I}$ then $\mathbb{E}_{\Es^*}\left[ \J_i( \ba ( \cdot ) )\right] = G( \Os^*) $, and \label{thm:main2}
  \item if $i \not\in \mathcal{I}$, then $\mathbb{E}_{\Es^*}\left[ \J_i( \ba ( \cdot ) )\right] \leq G( \Os^*)$.\label{thm:main3}
\end{enumerate}
\end{theorem}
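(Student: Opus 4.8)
My plan is to translate the two Nash inequalities in~\eqref{eq:Nash} into the three stated conditions by exploiting the bilinearity of the payoff. First I would fix notation: write $J_i^{*}:=\mathbb{E}_{\Es^{*}}[\J_i(\ba(\cdot))]$ for E's expected exposure to observer $i$ under $\Es^{*}$. Since $\J^{\Os}=\sum_i\lambda_i\J_i$, the payoff $P(\Os,\Es^{*})=\sum_{i=1}^{r}\lambda_i J_i^{*}$ is linear in $\Os$, so $\max_{\Os\in\Om}P(\Os,\Es^{*})=\max_{1\le i\le r}J_i^{*}$, attained at a vertex $\e_j$ of the simplex; and since $\lambda_i^{*}=0$ for $i\notin\mathcal{I}$, we also have $P(\Os^{*},\Es^{*})=\sum_{i\in\mathcal{I}}\lambda_i^{*}J_i^{*}$, a convex combination of $\{J_i^{*}:i\in\mathcal{I}\}$ with strictly positive weights. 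In the other slot, for every $\Es\in\Em$ there is the trivial bound $P(\Os^{*},\Es)=\mathbb{E}_{\Es}[\J^{\Os^{*}}(\ba(\cdot))]\ge\min_{\ba(\cdot)\in\Ep}\J^{\Os^{*}}(\ba(\cdot))=G(\Os^{*})$, with equality when $\Es$ is a Dirac mass at an $\Os^{*}$-optimal control (which exists, being produced by solving~\eqref{eq:WeightedCostEikonal}); this is essentially~\eqref{eq:PureEqualsMixed}.

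I would then prove the forward implication. Assuming $(\Os^{*},\Es^{*})$ is a Nash equilibrium, using a Dirac mass at an $\Os^{*}$-optimal control in the first line of~\eqref{eq:Nash} gives $P(\Os^{*},\Es^{*})\le G(\Os^{*})$, which together with the trivial lower bound forces $P(\Os^{*},\Es^{*})=G(\Os^{*})$. Setting $\Os=\e_j$ in the second line of~\eqref{eq:Nash} yields $J_j^{*}\le P(\Os^{*},\Es^{*})=G(\Os^{*})$ for every $j$; for $j\notin\mathcal{I}$ this is condition~\ref{thm:main3}. Then $G(\Os^{*})=\sum_{i\in\mathcal{I}}\lambda_i^{*}J_i^{*}$ is a positively-weighted convex combination of numbers all $\le G(\Os^{*})$, so each $J_i^{*}$ with $i\in\mathcal{I}$ must equal $G(\Os^{*})$, which is condition~\ref{thm:main2}. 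Finally, for any $\Os\in\Om$, $G(\Os)=\min_{\ba(\cdot)\in\Ep}\J^{\Os}(\ba(\cdot))\le\mathbb{E}_{\Es^{*}}[\J^{\Os}(\ba(\cdot))]=P(\Os,\Es^{*})\le P(\Os^{*},\Es^{*})=G(\Os^{*})$, the last inequality being the second line of~\eqref{eq:Nash}, so $\Os^{*}$ maximizes $G$, i.e.\ condition~\ref{thm:main1}.

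For the converse I would note that only conditions~\ref{thm:main2} and~\ref{thm:main3} are actually used. Condition~\ref{thm:main2} gives $P(\Os^{*},\Es^{*})=\sum_{i\in\mathcal{I}}\lambda_i^{*}J_i^{*}=G(\Os^{*})$; hence for every $\Es$, $P(\Os^{*},\Es)\ge G(\Os^{*})=P(\Os^{*},\Es^{*})$, which is the first Nash inequality. Conditions~\ref{thm:main2}--\ref{thm:main3} give $J_i^{*}\le G(\Os^{*})$ for all $i$, so for every $\Os$, $P(\Os,\Es^{*})=\sum_i\lambda_i J_i^{*}\le\max_i J_i^{*}\le G(\Os^{*})=P(\Os^{*},\Es^{*})$, which is the second Nash inequality; thus $(\Os^{*},\Es^{*})$ is a Nash equilibrium. (Condition~\ref{thm:main1} then follows for free from $G(\Os)\le P(\Os,\Es^{*})\le G(\Os^{*})$, so it is logically redundant in this direction even though it is exactly the condition solved numerically by~\cref{alg:supergradient}; the natural reading is that~\ref{thm:main1} pins down O's half $\Os^{*}$ while~\ref{thm:main2}--\ref{thm:main3} pin down E's half given $\Os^{*}$.)

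I do not anticipate a real obstacle here: the whole argument is elementary and, notably, self-contained -- it never invokes the minimax theorem~\eqref{eq:mixed}, which is needed only to guarantee that a Nash equilibrium exists at all. The one place that warrants care is the pair of ``$\min$ versus expectation'' steps, which quietly rely on~\eqref{eq:PureEqualsMixed} and on the existence of an $\Os^{*}$-optimal control, together with the tacit restriction of E's pure strategies to controls that actually reach $\x_T$ (so that each $\J_i$, and hence each $J_i^{*}$, is finite -- consistent with the boundedness caveat in the footnote on the set $S$ above).
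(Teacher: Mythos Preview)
Your proof is correct and differs from the paper's in a few instructive ways. For the forward direction, the paper invokes the minimax theorem for semi-infinite games to obtain condition~\ref{thm:main1}, and then establishes conditions~\ref{thm:main2} and~\ref{thm:main3} by explicit weight-shifting contradictions (moving probability mass between observer positions to strictly increase $P(\cdot,\Es^{*})$). You instead derive condition~\ref{thm:main1} directly from the second Nash inequality via $G(\Os)\le P(\Os,\Es^{*})\le P(\Os^{*},\Es^{*})=G(\Os^{*})$, avoiding the minimax theorem entirely; and you obtain conditions~\ref{thm:main2}--\ref{thm:main3} in one stroke by first bounding $J_j^{*}\le G(\Os^{*})$ at each vertex $\e_j$ and then noting that a strictly-positive convex combination of such numbers can equal $G(\Os^{*})$ only if every term does. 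This is cleaner and fully self-contained. Your observation that condition~\ref{thm:main1} is logically redundant in the reverse direction (it follows once Nash is re-established from~\ref{thm:main2}--\ref{thm:main3}) is correct and sharpens the logical structure; the paper's reverse argument likewise uses only~\ref{thm:main2}--\ref{thm:main3}, but does not make this redundancy explicit. The paper's route has the minor expository advantage of foregrounding the minimax theorem (which is what guarantees a Nash equilibrium exists in the first place), but as you note, the characterization itself does not require it.
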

\begin{proof}
( $\Rightarrow $)

Suppose $(\Os^*, \Es^*)$ is a Nash equilibrium. \Cref{thm:main1} follows from the minimax theorem for semi-infinite game and ~\cref{eq:PureEqualsMixed}. Assume \cref{thm:main2} does not hold,
then there must exist $i,j \in \mathcal{I}$ s.t. $\mathbb{E}_{\Es^*}\left[ \J_i( \ba ( \cdot ) )\right] >\mathbb{E}_{\Es^*}\left[ \J_j(\ba ( \cdot ) )\right]$. Consider the strategy $\hat{\Os} \in \Delta_r $:
\[
\hat{\lambda}_k  = \begin{cases}
\lambda^*_i + \lambda^*_j & \text{if } k = i \\
0 & \text{if } k = j \\
\lambda^*_k & \text{otherwise } \\
\end{cases} \ .
\]
Then we have that:
\[
\tiny
P( \Os^*, \Es^*)
= \sum_{i=1}^{r } \lambda^*_i \mathbb{E}_{\Es^*} \left[ \J_i( \ba ( \cdot ) )\right] < \sum_{i=1}^{r } \hat{\lambda}_i \mathbb{E}_{\Es^*}\left[ \J_i( \ba ( \cdot ) )\right] = P(\hat{\Os}, \Es^*) \ .
\]
This contradicts that $(\Os^*, \Es^*)$ is a Nash equilibrium, thus~\cref{thm:main2} must hold.
A similar argument can be used to demonstrate~\cref{thm:main3}: assume there exists $i \not\in \mathcal{I}$ with $\mathbb{E}_{\Es^*}\left[ \J_i( \ba ( \cdot ) )\right] > G( \Os^*) $. Let $j \in \mathcal{I}$ and consider the strategy $\hat{\Os}$:
\[
\hat{\lambda  }_k  = \begin{cases}
\lambda^*_j & \text{if } k = i \\
0 & \text{if } k = j \\
\lambda^*_k & \text{otherwise } \\
\end{cases}
\]
Once again, this implies that $ P( \Os^*, \Es^*)  < P(\hat{\Os}, \Es^*)$ which contradicts that $( \Os^*, \Es^*)$ is a Nash equilibrium. \

($\Leftarrow $)\
Assume~\cref{thm:main1,thm:main2,thm:main3} hold and suppose
there exists $\Es$ s.t. $P(\Os^*, \Es) < P(\Os^*, \Es^*)$, then there must exist $\ba( \cdot )$, used with non-zero probability in $\Es$ such that:
\[
\J^{\Os^*} ( \ba ( \cdot )) < P(\Os^*, \Es^*) = G(\Os^*) \ .
\]
This contradicts the definition of $G(\Os^*) =  \argmin_{\ba( \cdot ) \in \Ep} \J^{\Os^*} (\ba ( \cdot ))$.
Thus, for all $\Es \in \Em$ we have that:
\begin{equation} \label{eq:cond1}
P(\Os^*, \Es^* )  \leq P(\Os^*, \Es) \ .
\end{equation}
From \cref{thm:main2,thm:main3} it follows that for all $\Os \in \Om$:
\begin{equation} \label{eq:cond2}
P(\Os^*, \Es^*) = \sum_{i=1}^r \lambda^*_i \mathbb{E}_{\Es^*}\left[ \J_i( \ba ( \cdot ) )\right] \geq \sum_{i=1}^r \lambda_i \mathbb{E}_{\Es^*}\left[ \J_i( \ba ( \cdot ) )\right] = P(\Os, \Es^*) \ .
\end{equation}
\Cref{eq:cond1,eq:cond2} imply that $(\Os^*, \Es^*)$ is a Nash equilibrium.
\end{proof}

Any mix of $\Os^*$-optimal trajectories forms a $\Os^*$-optimal strategy for the evader.
However, that mix is part of a Nash equilibrium only if the observer has no incentive to change his strategy in response.
\Cref{thm:main}~says that this is the case when the $\Es^*$ defining the mix of individual observability of $\Os^*$-optimal trajectories lies on the central ray of the Pareto Front for a reduced problem. I.e., the PF for the SEG
where the observer has a potentially smaller number of positions (the ones which are used with positive probability in $\Os^*$). This PF is in an $s$ dimensional criterion space, where $s = | \mathcal{I} | \leq r$.
In~\cref{fig:observability1}, the number of observer positions is $r=2$, and the dimension of the ``reduced" problem is also $s=2$ since both positions are used with positive probability.
In this example, a single $\Os^*$-optimal trajectory exists and corresponds to the intersection of the central ray and the convex part of the PF. In the examples from~\cref{fig:NoObstaclesHole} and~\cref{fig:observability2}, we still have $r=2$ and $s=2$, however there are two $\Os^*$-optimal trajectories.
The Nash mixed strategy for E is thus obtained by finding a probability distribution $(\omega_1, \omega_2) \in \Delta_2$ over these two trajectories $(\ba_1 ( \cdot), \ba_2 ( \cdot) )$ such that the linear combination of their individual costs lies on the central ray, i.e., such that $ \omega_1 \J_1 (\ba_1( \cdot) ) + \omega_2  \J_1(\ba_2( \cdot)) =  \omega_1 \J_2 (\ba_1( \cdot) ) + \omega_2 \J_2(\ba_2( \cdot)) $.
In the example from~\cref{fig:observability3}, $r=2$ and $s=1$. The PF of the reduced problem is a single point, and thus trivially lies on the ``central ray", yielding a pure Nash equilibrium strategy for E. In~\cref{sec:examples}, we show additional examples with $r=3$, $s=3$, and $r=6$, $s=4$.
Computationally, \Cref{thm:main} means that if we are able to find a set of $g$ $\Os^*$-optimal control functions $\mathcal{A}(\Os^*) = \{  \ba_j ( \cdot ) \}_{j=1}^{j=g}$, such that~\cref{thm:main2,thm:main3} hold for some probability distribution $\bomega \in \Delta_g$, then $\Os^*$ is O's optimal response to $\bomega$ and we have found a Nash equilibrium pair. Note that the minimum number of trajectories $g$ needed to form a Nash equilibrium is bounded above by $s$.

One remaining task is finding such a set $\mathcal{A}(\Os^*)$.  Multiple $\Os^*$-optimal controls
only exist if $\x_S$ lies on a shockline of $u^{\Os^*}$, where the gradient is undefined (e.g., the $\lim_{\x_i\to\x_S} \nabla u(\x_i)$ can be different depending on the sequence $\{x_i\}_i$).  Numerically, our  approximation of $u^{\Os^*}$ will yield a single upwind approximation of $\nabla u^{\Os^*},$ yielding a single $\Os^*$-optimal trajectory.
As we show in~\cref{fig:shock}, multiple optimal trajectories might lie in the same upwind quadrant and any numerical implementation of gradient descent will find only one of them.  (In theory, one can approximate the other by perturbing $\x_S$, but the direction of perturbation is unobvious, particularly when $\x_S$ lies on an intersection of multiple shocklines, which is surprisingly common in this application as we show in further sections.)
\begin{figure}[tbhp]
\centering
\subfloat[]{\label{fig:shocka}
\includegraphics[width=0.31\textwidth]{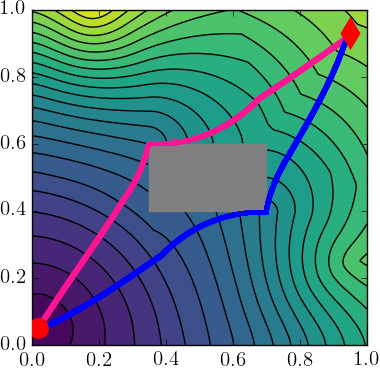}
}
\subfloat[]{\label{fig:shockb}
\includegraphics[width=0.31\textwidth]{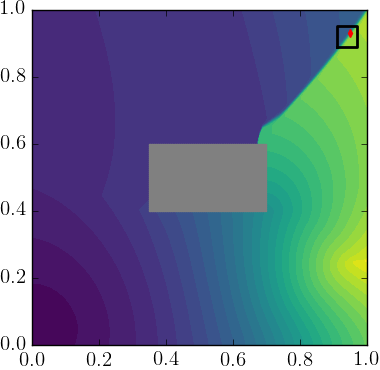}
}
\subfloat[]{\label{fig:shockc}
\includegraphics[width=0.32\textwidth]{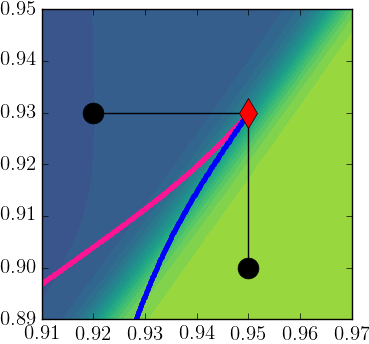}
}
\caption{ \footnotesize
  (a) Two $\Os^*$-optimal trajectories in pink and blue plotted over the level sets of $u^{\Os^*}(x)$. The source location $\x_S$ is on a shockline of $u^{\Os^*}(x)$, the two trajectories have the same expected cumulative observability, but different individual cumulative observability.
    (b) The individual cost function $v_1^{\Os^*}(x)$ is discontinuous at the source $\x_S$. The black square is the region displayed on (c).
    (c) The individual cost function $v_1^{\Os^*}(x)$ zoomed in around the source and a depiction of the upwind stencil.
    The stencil (displayed larger for the sake of visualization) contains a point on either side of the line of discontinuity of $v_1^{\Os^*}(x)$.
}
\label{fig:shock}
\end{figure}

This challenge is even more pronounced because~\ref{alg:supergradient} yields {\em an approximate} value of $\Os^*$, since $\x_S$ will now be only {\em near} a shockline for some perturbed $\Os^*_{\delta} = \Os^* + \delta \Os$.  The resulting single
$\Os^*_{\delta}$-optimal control will be a reasonable approximate solution for the max-min problem, but can be arbitrarily far from the solution to a min-max problem (where O has a chance to switch to another strategy).

In view of these challenges, we opt for a different approach, where an approximation to $\mathcal{A}(\Os^*)$ is computed iteratively, by adaptively growing a collection
of $\Os^*_{\delta}$-optimal controls corresponding to different $\delta \Os$'s.
In some degenerate cases, generating even the first $\ba_1(\cdot) \in \mathcal{A}(\Os^*)$  may not be trivial since some $\Os^*$-optimal control computed by solving the Eikonal will not be necessarily Pareto-optimal.  E.g., in~\cref{fig:observability3} two control functions are $\Os^*$-optimal, but only one of them is used in the Nash strategy of E as the blue trajectory violates~\cref{thm:main3}. However, both trajectories are indistinguishable from the point of view of the Eikonal solver since the position $\hat{\x}_1$ has zero weight in the weighted observability function $\K^{\Os^*}$.
To address this issue whenever $s < r$, we set the weight of the pointwise observability of each unused position $i \not\in \mathcal{I}$ to some small value $\epsilon$ (our implementation uses $ \epsilon = 10^{-6}$). This is equivalent to seeking the solution of the weighted cost Eikonal equation for some perturbed $\Os^*_{\delta} =   (1- \epsilon ) \Os^* + \frac{\epsilon}{r-s} I_{\mathcal{I}^c}$, where $I_{\mathcal{I}^c}$ is the characteristic function of the complement of $\mathcal{I}$.
We now turn our attention to finding further perturbations needed to generate $\Os^*_{\delta}$-optimal trajectories in order to make~\cref{thm:main2} approximately hold. Our goal is to have
\begin{equation} \label{eq:characterization2}
\sum_{j=1}^{g} \omega_j  \J_i (  \ba_j ( \cdot) ) = G( \Os^* )
\end{equation}
approximately hold for all $ i \in \mathcal{I} = \{ i \mid \lambda^*_i > 0 \} $.
Unless this is already true with $g=1$ (based on the previously found $\ba_1(\cdot)$), we will need to find more $\Os^*_{\delta}$-optimal controls.
Without loss of generality assume that $ \mathcal{I} = \{ 1 , \dots, s \}$, and
suppose we have already generated a set of $k$ $\Os^*_{\delta}$-optimal trajectories $\A^{k} = \{\ba_1( \cdot), \ba_2 ( \cdot), \dots ,\ba_{k} ( \cdot ) \}$, for some $k < g$ .
In order for~\cref{eq:characterization2} to approximately hold, we will be increasing $k$ until the norm of residual
\begin{equation}\label{eq:characterizationMatrix}
  \small
  \Res(\bomega) =
   \begin{bmatrix} G(\Os^*) \\ G(\Os^*)\\ \vdots \\ G(\Os^*) \end{bmatrix} -
\begin{bmatrix}
\J_1 (  \ba_1 ( \cdot) ) & \J_1 (  \ba_2 ( \cdot) ) & \hdots & \J_1 (  \ba_{k} ( \cdot) ) \\
\J_2 (  \ba_1 ( \cdot) ) & \J_2 (  \ba_2 ( \cdot) ) & \hdots & \J_2 (  \ba_{k} ( \cdot) ) \\
 \vdots &  \vdots & \ \ddots & \vdots \\
\J_s (  \ba_1 ( \cdot) ) & \J_s (  \ba_2 ( \cdot) ) & \hdots & \J_s (  \ba_{k} ( \cdot) )
\end{bmatrix}
\begin{bmatrix} \omega_1 \\\omega_2 \\ \vdots \\ \omega_{k} \end{bmatrix}
\end{equation}
falls under a threshold $\text{tol}_{\Res}$. Assuming the set of trajectories $\A^k$ has already been computed, the probability distribution $\bomega^k \in \Delta_{k}$ minimizing the norm of this residual $\|\Res(\bomega^k)\|_2 $ can be found by quadratic programming.
The residual vector $\Res(\bomega^k)$ provides information about which control functions are missing. For example, consider the case where we observe that a single entry of $\Res(\bomega^k)$ is large and positive, i.e., that for some $i \in \mathcal{I}$:
\[
\sum_{j=1}^{k} \omega_j^k  \J_i (  \ba_j ( \cdot) ) << G( \Os^* ) \ .
\]
The characterization in~\cref{thm:main} implies that $\A ( \Os^*)$ should include at least one trajectory much more observable from position $\hat{\x}_i$.
A $\Os^*_{\delta}$-optimal trajectory with that property can be found by perturbing $\Os$ to slightly decrease the role of $\xhat_i$ in O's chosen strategy.
This is equivalent to re-solving the Eikonal with $\K^{\Os^*_{\delta}}$ corresponding to $\Os^*_{\delta} = \Pi_{\mathcal{I}} \left( \Os^* - \delta \e_i \right)$
 where $\delta << 1$ is chosen adaptively (see~\cref{alg:param}), $\e_i$ is the $i$-th standard basis vector, and $\Pi_{\mathcal{I}}$ is the orthogonal projection onto the simplex defined only with elements of $\mathcal{I}$.
Once a new $\Os^*_{\delta}$-optimal control function has been found, we may solve the quadratic program in~\cref{eq:characterizationMatrix} again with an additional column, and repeat the process until the norm of the residual is sufficiently small.
More generally, a large $\|\Res(\bomega)\|$ implies that some control functions in $\Ep (\Os^*) $ (or some mix of control functions) not in the current set $\A^k$ has a high observability with respect to the positive entries of $\Res(\bomega)$
while having a low observability with respect to the negative entries of $\Res(\bomega)$. Thus, we set the perturbation direction to $-\Res(\bomega)$ instead of $-\e_i$. Throughout this perturbation step, the entries of $\Os^*$ associated with the complement $\mathcal{I}$ are held fixed. Our full method for computing an approximate Nash equilibrium is summarized in~\cref{alg:mainalg}.

This method also has a geometric interpretation in terms of the Pareto Front. Whenever $\Es^*$ is not a pure strategy, a hyperplane normal to $\Os^*$ supports PF at multiple points (corresponding to all controls in $\A ( \Os^*)$). However, any generic perturbation of $\Os^*$ would result in a hyperplane supporting PF near only one of these points, and the approximation to $\Os^*$ found by~\cref{alg:supergradient}, will correspond to a single optimal trajectory.
For example, if we start with $\ba_1 ( \cdot)$ corresponding to the yellow point in~\cref{fig:obs2c} (and associated yellow trajectory in~\cref{fig:obs2b}), then a small tilt (decreasing the role of position $\hat{\x}_1$ in O's plan) will yield a hyperplane supporting PF near the green point, allowing us to approximate the green trajectory in~\cref{fig:obs2b} by solving the weighted cost Eikonal equation with observability function $\K^{\Os^*_{\delta}}$.
\begin{algorithm}[h]
\begin{algorithmic}[1]
  \caption{ Computing an approximate Nash equilibrium of the SEG. }\label{alg:mainalg}
\STATE{Find $\Os^*$ using~\cref{alg:supergradient} }
\STATE{ $\mathcal{I} \gets \{i \mid \lambda^*_i > \text{tol}_{\lambda}  \}$ }
\STATE{    $\Os^*_{\delta} \leftarrow (1- \epsilon ) \Os^* + \frac{\epsilon}{r-s} I_{\mathcal{I}^c} $ }
\STATE{ Find $\Os^*_{\delta}$-optimal control $\ba_{1} ( \cdot )$ and compute $\J_i( \ba_{1}( \cdot ))$ for all $i \in \mathcal{I}$}
\STATE{ $k \gets 1$, $\A^1 \gets \{ \ba_1 ( \cdot) \} $, $\bomega^1  \gets 1 $}
\WHILE{ $\| \Res(\bomega^k)\| >  \text{tol}_{\Res} $}
\STATE{    $\Os^*_{\delta} \leftarrow (1- \epsilon ) \Pi_{\mathcal{I}} \left(\Os^* - \delta \Res(\bomega^k) \right) + \frac{\epsilon}{r-s} I_{\mathcal{I}^c} $ }
\STATE\label{line8}{ Find $\Os^*_{\delta}$-optimal control $\ba_{k+1} ( \cdot )$ and compute $\J_i( \ba_{k+1}( \cdot ))$ for all $i \in \mathcal{I}$ }
\STATE{ $\A^{k+1} \gets \A^{k} \cup \{ \ba_k ( \cdot )  \}$}
\STATE{ $\bomega^{k+1} \leftarrow  \argmin_{ \bomega^{k+1} \in \Delta_{k+1} } \| \Res(\bomega^{k+1}) \|_2 $}
\STATE{ $k \gets k + 1$}
\ENDWHILE\\
\RETURN $\Os^*$, $\A^k$, $\bomega^k$
\end{algorithmic}
\end{algorithm}


\section{Numerical matters}\label{sec:examples}
In this section, we detail the implementation of our algorithm and present additional numerical results.
All algorithms were implemented in C++ and compiled
with icpc version 16.0 on a MacBook Pro (16 GB RAM and an Intel Core i7 processor with
four 2.5 GHz cores). The code is available online at
\url{https://github.com/eikonal-equation/Stationary_SEG}.
Our implementation relies on data structures and methods from Boost, Eigen and QuadProg++ libraries.

\subsection{Functions, parameters, methods}\label{sec:parameters}
All of our examples are posed on the domain $\domain = [0,1]^2$ with the possible exclusion of obstacles.
All figures are based on computations on a uniform cartesian grid of size  $ n \times n = 501 \times 501$ (with the grid spacing $h=1/500$).
To simplify the discussion, we
always use a constant speed function $f(x) = 1$ though any inhomogeneous speed can be similarly handled by solving the Eikonal equation~\cref{eq:Eikonal}.

The pointwise observability functions are defined as
\[
\K_i(\x) =
\begin{cases}
   \sigma, & \text{if $\x$ is in a shadow zone of $\hat{\x}_i$}; \\
   \hat{\K} ( | \x - \hat{\x}_i | ) \, + \, \sigma, & \text{otherwise}.
\end{cases}
\]
We set $\sigma = 0.1$ and $\hat{\K}(r) = (\rho r^2 + 0.1 )^{-1}$ with $\rho = 1$ in all examples except in~\cref{fig:observability3} (where we set $\rho = 30$ simply to improve the visualization).  The visibility of each gridpoint with respect to each observer position is precomputed and stored, but the $\K_i$ values are computed on the fly as needed.

The shadow zones for each observer are precomputed as follows.
For each observer location $\hat{\x}_i$, two distance functions are computed:  $D_0^i(\x)$ and $D^i(\x)$. The first is the distance between $\hat{\x}_i$ and $\x$ when the obstacles are absent, while the second is that distance when obstacles are present.
These distance functions can be computed by imposing the boundary conditions $D_0^i(\hat{\x}_i)=D^i(\hat{\x}_i)=0$ and then solving two Eikonal equations~\cite{SethBook2}:
\begin{equation}
\big| \nabla D_0^i(\x) \big| = 1, \qquad  \big| \nabla D^i(\x) \big| = \text{Obs}(\x),
\end{equation}
with $\text{Obs}(\x)$ set to $\infty$ inside the obstacles and $1$ otherwise.
The shadow zone of  $\hat{\x}_i$ is characterized by $D^i > D_0^i$. But due to numerical errors in their approximation,
we use a threshold value  $\tau = 10^{-3}h$ (where $h$ is the grid spacing)
and specify that $\x$ is in this shadow zone whenever $D^i(\x) > D_0^i(\x) + \tau.$

The perturbation stepsize $\delta$ in~\cref{alg:mainalg} is chosen adaptively using~\cref{alg:param}. The goal of the adaptive strategy is to find the smallest perturbation $\delta$ necessary to obtain an additional $\Os^*_{\delta}$-optimal control function $\ba_{k+1} ( \cdot)$.
\begin{algorithm}[H]
\begin{algorithmic}[1]
  \caption{ Adaptive strategy for choosing $\delta$ to generate $\ba_{k+1}( \cdot) $}\label{alg:param}
\STATE{ $\delta  \gets \delta_0$  }
\STATE{ $\Os^*_{\delta} \leftarrow (1- \epsilon ) \Pi_{\mathcal{I}} \left(\Os^* - \delta \Res(\bomega^k) \right) + \frac{\epsilon}{r-s} I_{\mathcal{I}^c} $ }
\STATE{ Compute a $\Os^*_{\delta}$-optimal control function $\hat{\ba} ( \cdot )$ }
\WHILE{ $\| \bJ(\hat{\ba} ( \cdot)) - \bJ(\ba_{j} ( \cdot ) )\|_{2} < \text{tol}_{\delta}$  for any $j \in \{ 1, \dots k \}$ }
\STATE{ $\delta \gets 2 \delta$  }
\STATE{ $\Os^*_{\delta} \leftarrow (1- \epsilon ) \Pi_{\mathcal{I}} \left(\Os^* - \delta \Res(\bomega^k) \right) + \frac{\epsilon}{r-s} I_{\mathcal{I}^c} $ }
\STATE{ Compute $\Os^*_{\delta}$-optimal control function $\hat{\ba} ( \cdot )$ }
\ENDWHILE\\
\STATE{ $\ba_{k+1}( \cdot ) \gets \hat{\ba}( \cdot ) $}
\end{algorithmic}
\end{algorithm}
The initialization used in our implementation is $\delta_0  =10^{-4}$, and the tolerance is set to $\text{tol}_{\delta} = 10^{-2} \| \bJ(\hat{\ba} ( \cdot)) \|_{2} $.
The stepsize rule used in the supergradient iteration in~\cref{alg:supergradient} is $ \alpha_k = 1 \big/ ( k \| \bJ( \ba^{\Os_0} ( \cdot) ) \| )$, the initial guess $\Os_0$ is a uniform distribution on $\Ep$ and the tolerance criteria on the residual and the near $0$ entries used in~\cref{alg:mainalg} are $\text{tol}_{\Res}= 10^{-3} G(\Os^*)$ and $\text{tol}_{\lambda} = 5 \cdot 10^{-3}$ respectively.
The quadratic programming problem in~\cref{eq:characterizationMatrix} is solved using the library QuadProg++.

\subsection{Computation of individual costs} \label{sec:computingcosts}
Running~\cref{alg:supergradient} requires computing the vector of individual observability $\bJ ( \x_S, \ba^{\Os} ( \cdot ) )  $.
This problem is exactly the one solved by the scalarization approach described
 in~\cref{sec:MultiObjective}. Therefore, it can in principle be done by solving the Eikonal equation in~\cref{eq:Eikonal} with cost function $\K^{\Os}$
and associated linear equations in~\cref{eq:associatedlinear};
i.e.:  $G(\Os) = u ^{\Os} ( \x_S) $ and $\J_i ( \x_S, \ba^{\Os} ( \cdot ) ) = v_i ^{\Os} (\x_S) $. However, this technique has a severe drawback for this particular application: at the optimal $\Os^*$, $v^{\Os^*}_i$ is often discontinuous at $\x_S $. E.g., in~\cref{fig:shockb}, the
upwind stencil containing the two $\Os^*$-optimal trajectories contains a point on either side of the discontinuity line of $v_1^{\Os^*}$ (which is the shockline of $u^{\Os^*}$). As a result, the value of $v_1^{\Os^*}( \x_S)$ is updated by interpolating the discontinuous function $v_1^{\Os^*}$ across the line of discontinuity.

This effect happens when multiple trajectories are $\Os^*$-optimal.
Each of these trajectories has the same expected cumulative observability $\J^{\Os^*} = \sum_{i} \lambda^*_i \J_i$, but different individual observability $\J_i$.
This issue leads to a large numerical error when using $v^{\Os}_i(\x_S)$ to estimate the supergradient in~\cref{alg:supergradient}, causing poor convergence of the method. Instead, we use the following process to compute the individual costs: first we solve the weighted cost Eikonal equation~\cref{eq:WeightedCostEikonal} to obtain $u^{\Os}$ for a fixed $\Os$, then we trace the path $\y(t)$ using a gradient descent method on the value function $u^{\Os}$ and numerically estimate the integrals:
\[
\mathcal{J}_i( \x_S , \ba^{\Os} ( \cdot )) = \int_{0}^{T_{\ba^{\Os}}} \K_i( \y (t), \ba^{\Os}(t) )\, dt, \qquad i=1,\ldots, r.
\]

\subsection{Additional experiments and error metrics}
We present two additional examples that include a higher number of observer plans.
In~\cref{fig:3paths}, we show an example where the mixed strategy Nash equilibrium consists of a distribution over three strategies for both the evader and the observer.
\Cref{fig:3paths} shows the value function $u^{\Os^*}$ at the optimal $\Os^*$. We observe that three shocklines of the value function $u^{\Os^*}$ meet at the source location $\x_S$, which implies that four trajectories are optimal starting from this location. However, the minimax theorem for infinite games assures that only 3 pure strategies are necessary to form a Nash equilibrium.
Using~\cref{alg:mainalg}, we find an approximate Nash equilibrium which uses a mix of such three trajectories.
\
\begin{figure}[h]
\begin{center}
\includegraphics[width=0.50\linewidth,keepaspectratio]{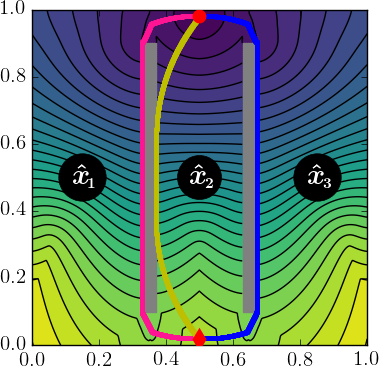}
\end{center}
\caption{ Computed Nash equilibrium for a situation where a mix of three pure strategies are necessary for each player. The value function $u^{\Os^*}$ with three near-$\Os^*$-optimal trajectories in pink, blue and yellow. Part of the pink is obstructed by the blue and green path. The optimal strategy for O is $\Os^* = \left[ p(\x_1), p(\x_2), p(\x_3)\right] = \left[ 0.34, 0.32, 0.34 \right] $, and the optimal strategy for E consists of three trajectories used with probability $\bomega^* = \left[ p(\text{blue}),p(\text{yellow}),p(\text{pink})  \right] = \left[ 0.40, 0.20, 0.40 \right] $.
In this example, the pink and yellow $\Os^*$-optimal trajectories initially coincide near $\x_S$, hence one cannot find both of them by perturbing the initial position $\x_S$.
}
\label{fig:3paths}
\end{figure}

In Figure~\ref{fig:Labyrinth}, we show a maze-like example where the observer may choose among six possible positions. Using~\cref{alg:mainalg}, we determine that at the approximate Nash equilibrium, only four positions are used with positive probability by O, and E uses four different trajectories which are displayed in~\cref{fig:Labyrinth}.

\begin{figure}[h]
\begin{center}
{\includegraphics[width=0.5\linewidth,keepaspectratio]{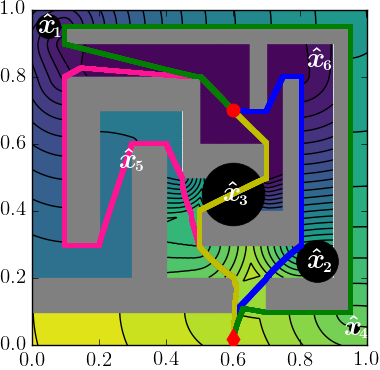}}
\end{center}
\caption{ Computed Nash equilibrium for a maze-like example. The value function $u^{\Os^*}$ and four near-$\Os^*$-optimal trajectories in pink, blue, yellow and green. The approximate Nash equilibrium strategy for O is $\Os^* = \left[ p(\hat{x}_i)\right]_{i=1}^{i=6} = \left[ 0.174,0.301,0.452,0.073,0,0\right] $. The approximate Nash equilibrium strategy for E uses four trajectories with probability $\bomega^* = \left[ p(\text{pink}),p(\text{yellow}),p(\text{blue}),p(\text{green})  \right] = \left[ 0.246,  0.461,  0.144,  0.149 \right] $.
}\label{fig:Labyrinth}
\end{figure}

In order to test the performance of~\cref{alg:mainalg}, we consider three error metrics:
\begin{enumerate}
  \item \textit{The optimization error} in $G(\Os)$ arises from several effects: the discretization error of the Eikonal solver, the discretization error of the path tracing and path integral evaluation, and the early stopping of the supergradient iterations. To generate the ``ground truth", we performed the same computation on a finer grid of size of $n = 2001 \times 2001$ (i.e. we consider a grid with 16 times more unknowns) and run the supergradient iteration until we observe stagnation in the objective function value of the iterates. We approximate the relative error in our computations on a $501\times 501$ grid as:
  \[
  E_{rel} \left[ G(\Os^*) \right]  =
  \left| G_{501}(\Os_{501}^*) - G_{2001}(\Os_{2001}^*) \right| \, \big/ \, G_{501}(\Os_{501}^*)  \ .
  \]

  \item \textit{The Observer's regret} estimates how much the observer could improve his payoff by unilaterally deviating from our approximate Nash equilibrium. (Recall that, if the approximate Nash equilibrium were exact, the observer would not be able to increase his payoff at all). We quantify this error using the  normalized residual in~\cref{eq:characterizationMatrix}, i.e.:
\[
\text{Observer's regret} = \left\| R(\bomega) \right\|_2 \, \big/ \, \left( |\mathcal{I}| G(\Os^*) \right) \ .
\]
  \item \textit{The Evader's regret} estimates how much the evader could improve his payoff by unilaterally deviating from our approximate Nash equilibrium. This corresponds to how far from $\Os^*$-optimal are the controls produced by~\cref{alg:mainalg}. Recall that the control function $\ba_1 (\cdot)$ is (up to numerical errors) $\Os^*$-optimal, whereas $\ba_{k}(\cdot)$ for $k \geq 2$ are $(\Os^* + \delta \Os)$-optimal. We report the maximum relative error in $\Os^*$ cumulative observability of the $(\Os^* + \delta \Os)$-optimal trajectories, that is:
  \[
  \text{Evader's regret} =
\max_{k} \left| \J^{\Os^*} (  \ba_1 ( \cdot )) - \J^{\Os^*} (  \ba_k ( \cdot )) \right| \, \big/ \, \J^{\Os^*} (  \ba_1 ( \cdot ))
  \]
\end{enumerate}

These error metrics are reported in ~\cref{tab:results} along with timing metrics for each example presented in the paper.
\begin{table}[tbhp]
{\footnotesize
  \caption{Table of timing and error metrics. The error metrics are described in the main body of the text.  }\label{tab:results}
\begin{center}
  \begin{tabular}{r  c  c  c c  c  }
  \toprule
    & \cref{fig:observability1} & \cref{fig:observability3} & \cref{fig:observability2} & \cref{fig:3paths}&  \cref{fig:Labyrinth}\\ [0.5ex]
  \midrule
  Number of it. of~\cref{alg:supergradient} & 100 &  100 & 100 & 300 & 400 \\
  Total CPU time (seconds) &61 & 61 & 69 & 198 &321 \\
  $  E_{rel} \left[ G(\Os^*) \right]$ & $1 \cdot 10^{-3} $ & $1 \cdot 10^{-3} $ & $9 \cdot 10^{-4} $ &  $1 \cdot 10^{-3} $ &$3 \cdot 10^{-4} $  \\
  Observer's regret&  $1 \cdot 10^{-4} $ & $0$ & $3 \cdot 10^{-4} $ & $4 \cdot 10^{-6} $ &$1 \cdot 10^{-4}$  \\
  Evader's regret & $0 $ &$0$  & $2 \cdot 10^{-3} $ & $2 \cdot 10^{-3} $ & $2 \cdot 10^{-2}$  \\
  \bottomrule
  \end{tabular}
\end{center}
}
\end{table}


\section{Extension to groups of evaders}
\label{sec:MultipleEvaders}
We now consider an extension of the surveillance-evasion game to a game which involves a team of $q$ evaders. Each evader $\text{E}^l$ chooses a trajectory leading him from his own source location $\x^l_S$ to a target location $\x^l_T$, according to his own speed function $f^l(x)$.
The pointwise observability function $\K^{\Os}$ is shared for all evaders and depends only on the strategy $\Os$ of the observer.
This induces $q$ different cumulative observability functions $\J^{l,\Os} ( \x_S^l, \ba^l( \cdot) )$ defined as in~\cref{eq:IntegratedObservability}, and $q$ different value functions $u^{l,\Os}$ which are solutions of Eikonal equations with $q$ different boundary conditions.

In this version of the game, we assume that a central organizer for evaders faces off against the observer. The goal of that central organizer is to minimize the weighted sum of evaders' cumulative expected observabilities. The weights $\{ w_l\}_{l=1}^{l=q}$ in the sum reflect the relative importance of each evader.  We further assume that the central organizer and the observer agree on that relative importance, making this a two player zero-sum game with a payoff function defined by:
\begin{equation} \label{eq:multEpayoff}
P( \Os, \{\Es^l\}_{l=1}^q ) = \sum_{l=1}^{l=q} w_l  \mathbb{E}_{\Es^l } \left[ \J^{l,\Os} ( \x_S^l, \ba^l( \cdot ) )\right] \ .
\end{equation}

Although we focus on a zero-sum two player game, we note that its Nash equilibrium $\left( \Os^*, \{ \Es^l \}_{l=1}^{l=q} \right)$ must also be among Nash equilibria of a different $(q+1)$-player game: the one, where each of the $q$ evaders is selfishly minimizing their own cumulative observability $\J^{l,\Os} ( \x_S^l, \ba_l( \cdot) )$, while the observer still attempts to maximize the crowd-wide observability in~\cref{eq:multEpayoff}.
This property
follows from two simple facts:
\begin{enumerate}
\item
The Observer's payoff is the same in both versions of the game and thus cannot be improved unilaterally in a $(q+1)$ player game.
\item
In the Nash equilibrium for the two-player game, the central organizer would only ask each evader to assign positive probabilities to their $\Os^*$-optimal trajectories. (Otherwise, the weighted sum in~\cref{eq:multEpayoff} could be improved). Thus, they would also be maximizing their individual payoffs.
\end{enumerate}

In this new setting,~\cref{thm:main} holds and the observer's half of the Nash equilibrium may be found by maximizing the concave function:
\begin{equation} \label{eq:WeightedSumObs}
G^q(\Os) = \min_{\ba^l ( \cdot )} \sum_{l=1}^{l=q} w_l \J^{\Os} ( \x_S^l, \ba^l( \cdot) ) \ .
\end{equation}
The function $G^q(\Os)$ and its supergradients may be evaluated in a similar way to~\cref{sec:computingcosts},
but require $q$ solves of the Eikonal equation with different boundary conditions and speed functions, and the numerical evaluation of $q\times r$ path integrals.
However, we note that if all evaders have the same speed function and share the same target location (or, alternatively, share the same source location), only a single Eikonal equation solve is in fact required.
With minor modifications,~\cref{alg:mainalg} may be also applied to solve this version of the problem.
For each perturbation of $\Os^*$ a set of $q$ control functions is generated on line~\ref{line8} of~\cref{alg:mainalg}, with one control function found for each evader. Although we obtain a new set of $q$ control functions for each perturbation, some of the control functions for specific evaders may be essentially the same as those already obtained from previous perturbations. We address this in post-processing, by pruning the output of modified~\cref{alg:mainalg} to identify distinct trajectories for each evader.

We show numerical results for two test problems with $q=2$ equally important evaders (i.e., $w_1=w_2$) in each of them.
An example presented in~\cref{fig:2evaders} uses the same obstacle and the same $r=2$ possible observer locations already used in~\cref{fig:observability2}.
At the approximate Nash equilibrium found using~\cref{alg:mainalg}, the observer uses these two locations with probabilities $\Os^* = (0.35,0.65)$ and the central controller directs both evaders to use pure policies: deterministically choose pink and blue trajectories to their respective targets.  Even though the first evader's starting position and destination are also the same as in~\cref{fig:observability2}, his (and the Observer's) optimal strategies are quite different here due to the second evader's participation.

\begin{figure}[tbhp]
\centering
\subfloat[]{\label{fig:2evadersa}
\includegraphics[width=0.48\textwidth]{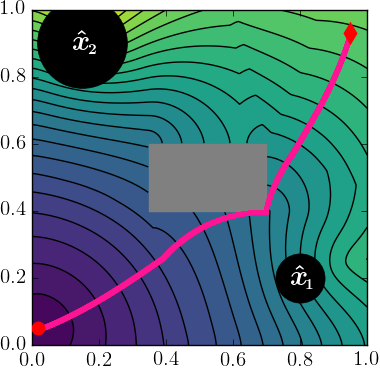}
 }
 \subfloat[]{\label{fig:2evadersb}
 \includegraphics[width=0.48\textwidth]{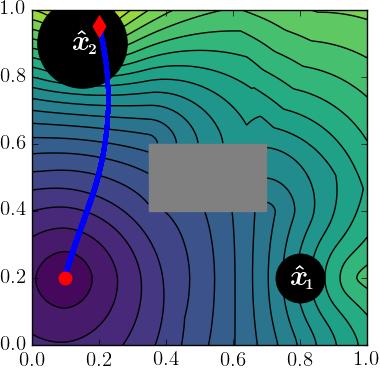}
  }
\caption{ \footnotesize
 Computed approximate Nash equilibrium for a group of two evaders. The approximate Nash equilibrium pair of strategies is $\Os^*$ for O, and a single $\Os^*$-optimal trajectory for each evader.  (a) The value function $u^{1,\Os^*}$ for $ \Os^* = \left[ 0.35,0.65 \right]$ of evader $1$, and the $\Os^*$-optimal trajectories for evader 1 shown in pink. (b) The value function $u^{2,\Os^*}$ for the same $\Os^*$ of evader $2$, and his $\Os^*$-optimal trajectory shown in blue.
}
\label{fig:2evaders}
\end{figure}
In a maze-like example presented in~\cref{fig:Labyrinth2evaders}, O can choose among six possible locations, but his optimal mixed strategy $\Os^*$ uses only four of them.
\cref{alg:mainalg} yields three sets of two near-$\Os^*$-optimal trajectories which form an approximate Nash equilibrium,
but they only contain two distinct trajectories for each of the evaders.
We report timing and error metrics for these two examples in~\cref{tab:resultsExtension}.

\begin{figure}[tbhp]
\centering
\subfloat[]{\label{fig:Labyrinth2evadersa}
\includegraphics[width=0.48\textwidth]{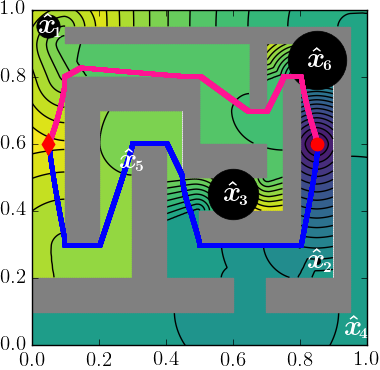}
 }
\subfloat[]{\label{fig:Labyrinth2evadersb}
\includegraphics[width=0.48\textwidth]{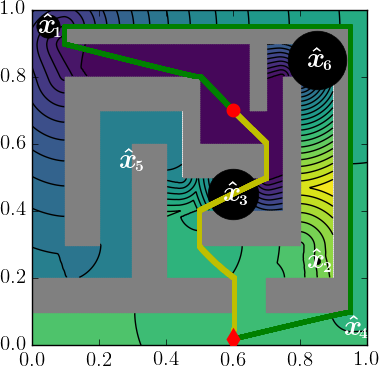} }
\caption{ \footnotesize
 Computed approximate Nash equilibrium for a maze-like example with two evaders.    (a) The value function $u^{1,\Os^*}$ of evader $1$, and two near-$\Os^*$-optimal trajectories for evader plotted in pink and blue.
 (b) The value function $u^{2,\Os^*}$ of evader $2$, and two near-$\Os^*$-optimal trajectories for evader 2 plotted in yellow and green.
 The approximate Nash equilibrium $(\Os^*, \Es^*)$ is $\Os^* = \left[ p ( \hat{x}_i) \right] =\left[ 0.168, 0.0455, 0.364, 0, 0,0.422 \right]$,
 and $\Es^*$ consists of a mixed strategy for the group of evaders.
 The mixed strategy of evader 1 is
 $\left[ p(\text{pink}),p(\text{blue}) \right] = \left[0.85, 0.15\right]$,
  and the mixed strategy for evader 2 is
  $ \left[ p(\text{yellow}) , p(\text{green}) \right] = \left[ 0.89, 0.11 \right].$
}
\label{fig:Labyrinth2evaders}
\end{figure}

\begin{table}[tbhp]
{\footnotesize
  \caption{Table of running times and errors for examples with multiple evaders. }\label{tab:resultsExtension}
\begin{center}
  \begin{tabular}{ r  c  c}
  \toprule
    & \cref{fig:2evaders} &\cref{fig:Labyrinth2evaders} \\ [0.5ex]
  \midrule
  Number of it. of~\cref{alg:supergradient} & 353 &  300  \\
  Total CPU time (seconds) & 631 & 594  \\
  $  E_{rel} \left[ G(\Os^*) \right]$ &  $5 \cdot 10^{-4}$ &  $7 \cdot 10^{-3}$ \\

  Observer's regret&  $5 \cdot 10^{-4}$  & $ 1 \cdot 10^{-3}$   \\
  Evader's regret & $5 \cdot 10^{-3} $ & $1 \cdot 10^{-2}$ \\
  \bottomrule
  \end{tabular}
\end{center}
}
\end{table}


\section{Conclusion}
\label{sec:Conclude}

We have considered an adversarial path planning problem, where the goal is to minimize the cumulative exposure/observability to a hostile observer.  The current position of the latter is unknown, but the full list of possible positions is assumed to be available in advance. The key assumption of our model is that neither the Evader (E) nor the enemy Observer (O) can adjust their plan in real time based on the opponent's state and actions.  Instead, both of them are required to choose their (possibly randomized) strategies in advance.  We discussed two versions of this problem; in the first one, a completely risk-averse evader attempts to minimize his worst-case cumulative observability.
We showed that this version can
be solved using previously developed methods for multiobjective path planning.
However, the solution 
is prohibitively computationally expensive when O has a large number of surveillance plans to choose from.
In the second version, the subject of optimization is the E's expected cumulative observability on its way to the target.
We modeled this as a zero-sum Surveillance-Evasion Game (SEG) between two players: E (the minimizer) and O (the maximizer).
We then presented an algorithm combining ideas from continuous optimal control, the scalarization approach for multiobjective optimization, and convex optimization which allows us to quickly compute an approximate Nash equilibrium of this semi-infinite strategic game.
Finally, we showed that this algorithm extends to solve a similar problem involving a group of multiple evaders controlled by a central planner.
The presented algorithm displays at most linear scaling in the number of observation plans, but further speed up techniques would be desirable; the computational bottleneck (numerically solving the Eikonal equation) could be alleviated with domain restriction methods~\cite{clawson2014causal} and factoring approaches~\cite{qi2018corner}.

Although this paper focused on isotropic problems, the anisotropic observer case could be treated in a similar fashion.  (In practice, the pointwise observability might depend on the angle between the evader's direction of motion and the observer's line of sight.) This generalization will have to rely on fast numerical methods developed for anisotropic HJB PDEs; e.g.,~\cite{SethVlad3,alton2012ordered,mirebeau2014efficient,tsai2003fast}.
In a follow-up paper \cite{TD_SEG}, we show that
time-dependent observation plans (e.g., different patrol routes) can be similarly treated by solving $\lambda$-parametrized finite-horizon optimal control problems with numerical methods for time-dependent HJB equations; e.g.,~\cite{Falcone_book,shu2007high}.

We note that the computational cost of our algorithm increases quickly with the number of evaders considered.
The case involving a large number of selfish evaders could be covered by considering the evolution of a time-dependent density of observers, and treating the problem using mean field games~\cite{gueant2011mean,carmona2017probabilistic}.
Another possible extension would be to consider a group of observers choosing among a larger set of surveillance plans.
In that situation, the set of pure strategies of the observers could increase exponentially,
but we anticipate that the computational cost will grow much slower since the number of required Eikonal solves would not increase.

\noindent
{\bf Acknowledgements: }
The authors would like to thank Alex Townsend and anonymous reviewers for their helpful suggestions.

\bibliographystyle{siam}
\bibliography{pathing}

\end{document}